\newtheorem{theorem}{Theorem}[section]  
\newtheorem{lemma}[theorem]{Lemma}  
\newtheorem{assumption}[theorem]{Assumption}  
\newtheorem{corollary}[theorem]{Corollary}  
\newtheorem{proposition}[theorem]{Proposition}
\newtheorem{remark}[theorem]{Remark}
\theoremstyle{definition}
\newcommand{\R}{\mathbb{R}}
\newcommand{\M}{\mathbb{M}}
\newcommand{\Sph}{\mathbb{S}}
\newcommand{\Nats}{\mathbb{N}}
\newcommand{\dif}{\mathrm{d}}
\newcommand{\PhiB}{\mathbf{\Phi}}
\newcommand{\MM}{\mathbf{M}}
\newcommand{\MLL}{\MM^{\rm LL}}
\newcommand{\MFD}{\MM^{\rm FD}}
\newcommand{\K}{\mathbf{K}}
\newcommand{\Id}{\mathbf{I}}
\newcommand{\PP}{\mathbf{P}}
\newcommand{\QQ}{\mathbf{Q}}
\newcommand{\opL}{\mathcal{L}}
\renewcommand{\AA}{\mathbf{A}}
\newcommand{\BB}{\mathbf{B}}
\newcommand{\CC}{\mathbf{C}}
\newcommand{\DD}{\mathbf{D}}
\newcommand{\EE}{\mathbf{E}}
\renewcommand{\SS}{\mathbf{S}}
\newcommand{\RR}{\mathbf{R}}
\newcommand{\UU}{\mathbf{U}}
\newcommand{\VV}{\mathbf{V}}
\newcommand{\WW}{\mathbf{W}}
\newcommand{\XX}{\mathbf{X}}
\newcommand{\ZZ}{\mathbf{Z}}
\newcommand{\zero}{\mathbf{0}}
\newcommand{\LLambda}{\boldsymbol{\Lambda}}
\newcommand{\TTheta}{\boldsymbol{\Theta}}
\newcommand{\GGamma}{\boldsymbol{\Gamma}}
\newcommand{\Ran}{\operatorname{Range}}
\newcommand{\Nul}{\operatorname{Null}}
\newcommand{\dist}{\operatorname{dist}}
\newcommand{\NB}{\mathbf{N}}
\newcommand{\diag}{\mathrm{diag}}
\newcommand{\m}{\tilde{m}}
\newcommand{\Manoa}{M\=anoa}
\newcommand{\Hawaii}{Hawai\kern.05em`\kern.05em\relax i }
\title{Spectral stability and perturbation results for kernel differentiation matrices on the sphere}
\author{T.~Hangelbroek\thanks{Department of Mathematics, University of \Hawaii– \Manoa, 2565 McCarthy Mall,
Honolulu, HI 96822, USA, {\tt hangelbr@math.hawaii.edu}. Research supported by by grant DMS-2010051 from the National Science Foundation.}, C.~Rieger\thanks{Philipps-Universit\"at Marburg, Department of Mathematics and Computer Science,
Hans-Meerwein-Stra\ss{}e 6, 35032 Marburg, 
Germany, {\tt riegerc@mathematik.uni-marburg.de}}, G.~Wright\thanks{Boise State University, 1910 University Drive, 83725, Boise, Idaho, USA,  {\tt gradywright@boisestate.edu}. Research supported by by grants DMS-1952674 and DMS-2309712 from the National Science Foundation.}}
\date{}
\begin{document}

\maketitle

\vspace{-0.2in}
\begin{abstract}
We investigate the spectrum of differentiation matrices for certain operators on the sphere that are generated from collocation at a set of “scattered” points $X$ with positive definite and conditionally positive definite kernels. We focus on cases where these matrices are constructed from collocation using all the points in $X$ and from local subsets of points (or stencils) in $X$. The former case are called global methods (e.g., the Kansa or radial basis function (RBF) pseudospectral method), while the latter are referred to as local methods (e.g., the RBF finite difference (RBF-FD) method).  Both techniques are used extensively for numerically solving certain partial differential equations on spheres, as well other domains. For time-dependent PDEs like the diffusion equation, the spectrum of the differentiation matrices and their stability under perturbations are central to understanding the temporal stability of the underlying numerical schemes. 

In the global case, we present a perturbation estimate for 
differentiation matrices which discretize operators 
that commute with the Laplace-Beltrami operator.
In doing so, we demonstrate that if such an operator has
negative  spectrum, then the differentiation matrix
does, too.
For conditionally positive definite kernels this is particularly challenging since the differentiation matrices are not necessarily diagonalizable.
This perturbation theory is then used to obtain bounds on the spectra of the differentiation matrices that arise from a local method using conditionally positive definite surface spline kernels. Numerical results are presented to confirm the theoretical estimates.
\end{abstract}

{\small \noindent\textbf{Keywords}: Kansa method, RBF Pseudospectral, Hurwitz stability, local Lagrange, RBF-FD}

\medskip

{\small \noindent\textbf{MSC Codes}: 65D12, 65D25, 65M06, 65M20, 65N12}

\section{Introduction}
Kernel-based collocation methods have become increasingly popular for approximating solutions of partial differential equations (PDEs) on spheres, $\Sph^d$, and other smooth surfaces as they do not require a grid or mesh and they can produce high-orders of accuracy (e.g.,~\cite{QTLG,fuselierwright_high,FW,fornberg_flyer_2015,SHANKAR2014JSC,LSW2016,Wendland2020}). These methods are typically formulated in terms of differentiation matrices (DMs) that approximate the underlying continuous spatial differential operators, generically denoted by $\opL$, of the PDE at a set of ‘scattered” nodes (or point cloud) $X$. For time-dependent problems, these kernel-based collocation methods are typically used in a method-of-lines approach, where the spatial derivatives are approximated by DMs and some initial value problem solver is used to advance the semi-discrete system in time~\cite{primerfull}. 

The success of this technique in terms of temporal stability is fundamentally dependent on properties of the spectrum of the DMs.  For example, from classical linear stability analysis, a necessary condition for the method-of-lines scheme to be stable in time is that the spectrum of the DM associated with the spatial derivatives of the PDE (scaled by the time-step) is contained in the stability domain of the initial value problem solver.  At the very least, this condition generally requires that the real part of the eigenvalues of the DM are negative, or non-positive (i.e., the DM is Hurwitz stable). While there are several numerical studies that investigate the spectral properties of kernel-based DMs on spheres and more general surfaces (e.g.,~\cite{fuselierwright_high,LSW2016,FW,FoL11}), there are surprisingly very few theoretical results in the literature despite the increasing popularity of these methods. One reason for this may be that the kernel-based DMs do not immediately inherit any symmetry properties of $\opL$ (e.g., self-adjointness). The aim of this article is to partially fill this gap by developing a spectral stability theory (and an associated perturbation theory) of DMs that arise from discretizations of operators that commute with the Laplace-Beltrami operator $\Delta$. This has application to a wide class of semi-linear parabolic PDEs on spheres, including reaction diffusion equations for modeling pattern formation and chemical signaling~\cite{fuselierwright_high}.

It is worth noting that there are some existing theoretical results on temporal stability of the semi-discrete systems.  For example, the study~\cite{platte2006eigenvalue} shows that DMs for certain operators $\opL$ on $\Sph^2$ have real spectra (a property sometimes called aperiodicity), although this is not sufficient to guarantee Hurwitz stability.  Another study~\cite{QTLG} demonstrates ``energy stability'' for a global collocation method of the heat equation on $\Sph^2$ based on positive definite kernels (this is also a consequence of our results, addressed in Section \ref{SS:energy_stability}, and generalized to the conditionally positive definite case). Finally, some recent works have theoretically studied temporal stability of kernel collocation methods on planar domains using a different approach~\cite{tominec_hyperbolic,Glaubitz}. These studies have primarily focused on hyperbolic PDEs and employ oversampling (or least squares formulations) to demonstrate ``energy stability''.

\subsection{Differentiation matrices}
The global (or Kansa or pseudospectral) kernel-based collocation approach of constructing a DM associated with a given set of distinct points $X\subset{\Sph^d}$ for the differential operator $\opL$ is based on interpolation with a positive definite (PD) or conditionally positive definite (CPD) kernel $\Phi:\Sph^d\times\Sph^d\to \R$ using all points in $X$.  Such DMs can be defined naturally using the Lagrange basis $\{\chi_j\}_{x_j\in X}$
for the trial space $\mathrm{span}_{x_\ell\in X}\Phi(\cdot,x_\ell)$ as
\begin{equation}\label{full_footprint}
\MM_X=  \Bigl(\opL \chi_{j}(x_k)\Bigr)_{j,k}.
\end{equation}
Note that the precise definition of a PD or CPD kernel is given in section 2, and the construction of this DM is described in more detail
in section \ref{S:PD}.

While DMs based on this global method can be computational expensive to compute since they use all the points in $X$, they have theoretical appeal, at least for certain elliptic problems.  For example, in~\cite{EHNRW}, it is shown that, for PD kernels and operators like $\opL=1-\Delta$, 
$\MM_X$ is invertible with a modest stability bound,
see \cite[Proposition 4.4]{EHNRW}, and that the spectrum of $\MM_X$ is strictly positive. One of the primary motivations of this paper is to show that similar results hold for CPD kernels and more general operators $\opL$ involving polynomials of $\Delta$.  For example, for the simpler case of $\opL=-\Delta$, we prove that the spectrum of $\MM_X$ is real and non-negative, as illustrated for a particular $X\subset\Sph^2$ in Figure \ref{fig:example_spectra} ($\times$ markers).

\begin{figure}[th]
\centering
\begin{tabular}{c}
\def\big{\includegraphics[width=0.7\textwidth]{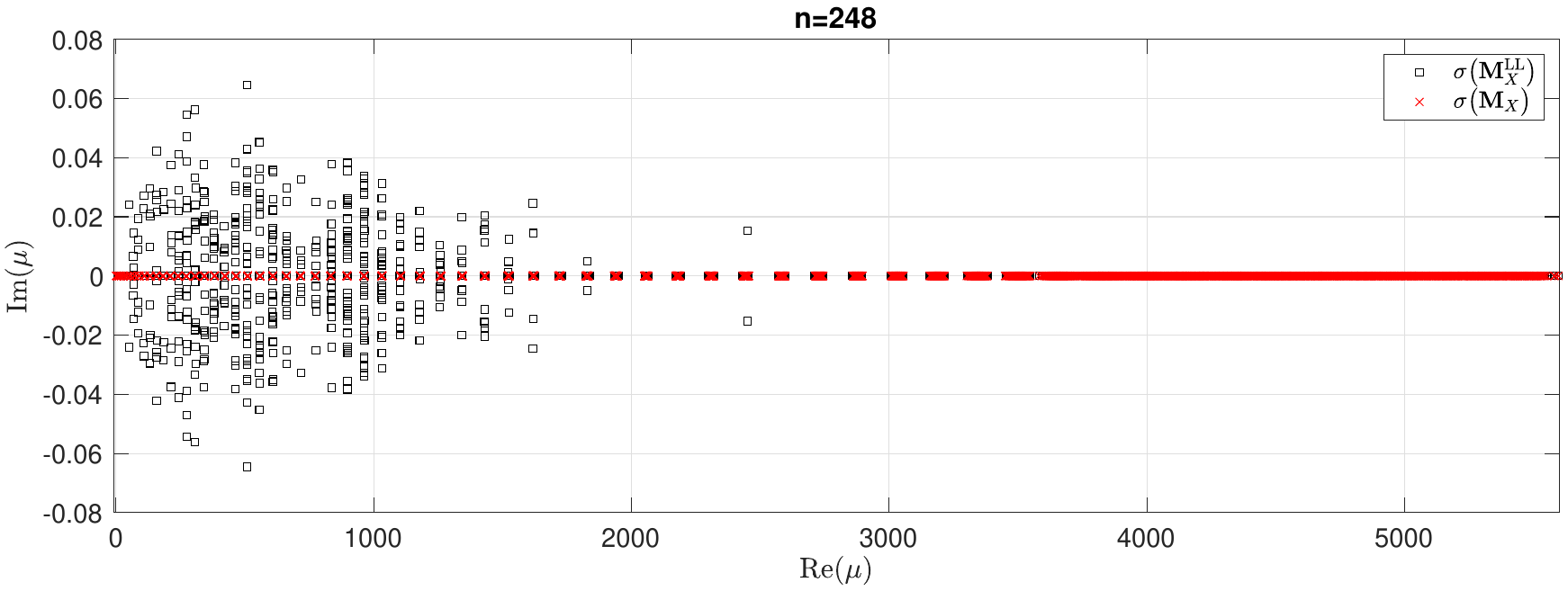}}
\def\little{\includegraphics[width=0.27\textwidth]{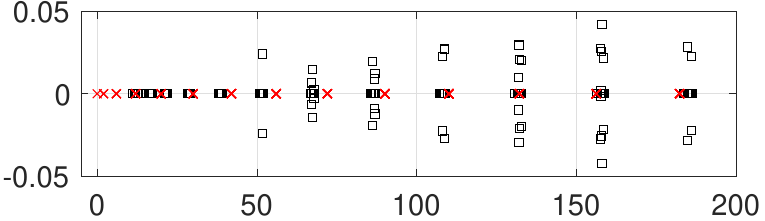}}
\stackinset{l}{40pt}{b}{18pt}{\little}{\big} \\
(a) Local Lagrange \\
\def\big{\includegraphics[width=0.7\textwidth]{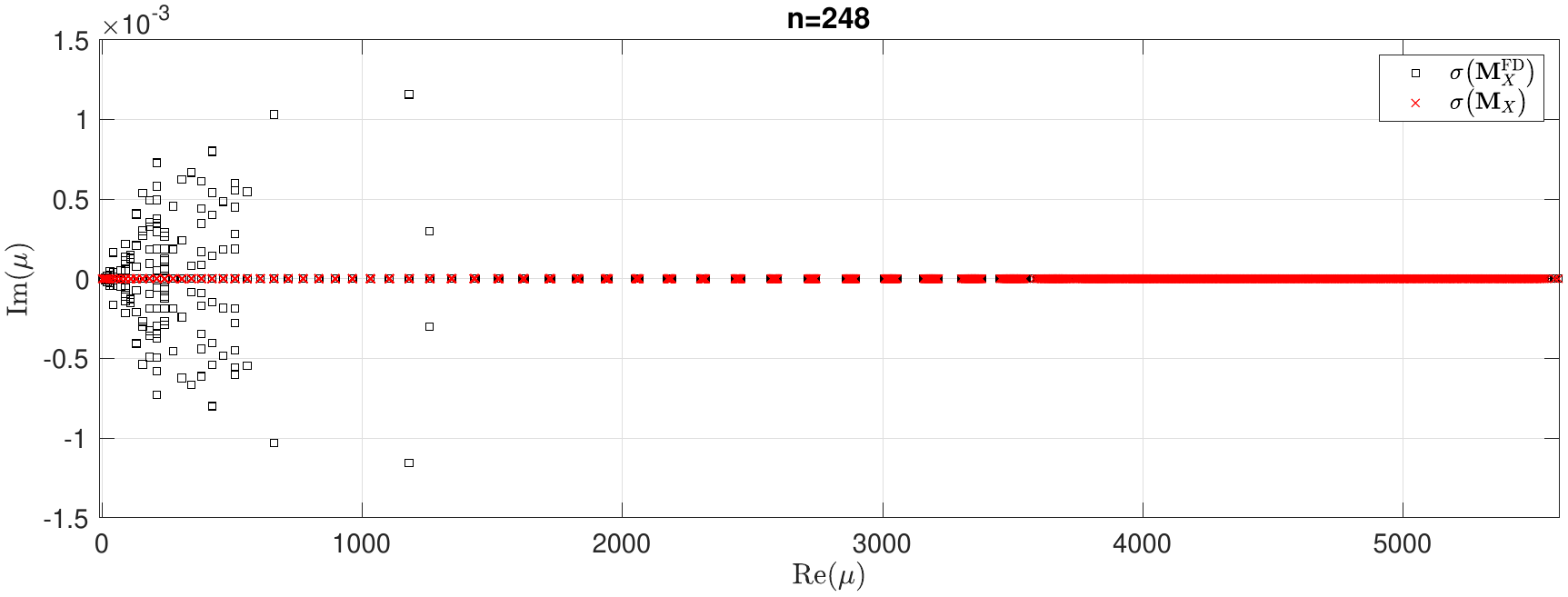}}
\def\little{\includegraphics[width=0.27\textwidth]{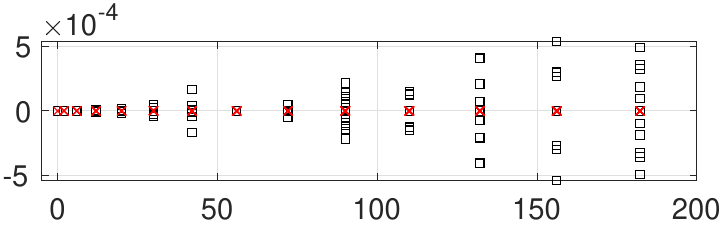}}
\stackinset{l}{40pt}{b}{18pt}{\little}{\big} \\
(b) RBF-FD
\end{tabular}
\caption{Comparison of the spectra of the DMs for the (negative) Laplace-Beltrami operator on $\Sph^2$ based on the global kernel collocation method ($\MM_X$) and two local methods: (a) local Lagrange ($\MLL_X$) and (b) RBF-FD ($\MFD_X$).  All results are for the CPD restricted surface spline kernel of order $m=3$ and a minimum energy point set on $\Sph^2$ (described in section \ref{S:Prelim}) with $N=4096$ points. Both local DMs were computed using stencils with $n=248$ points. Insets show the spectra near the origin of the complex plane. \label{fig:example_spectra}}
\end{figure}

A more computationally efficient approach considers instead constructing a DM by approximating $\opL$ using kernel-collocation over local subsets (or stencils) of $X$.  One of these techniques uses a ``local Lagrange functions'', $\{b_j\}$, generated by a PD or CPD kernel $\Phi$, but only using a small stencil $\Upsilon_j\subset X$ 
consisting of points near to $x_j$, so that $b_j(x_k) = \delta_{j,k}$ for $x_k \in \Upsilon_j$. In this case the DM, which we denote by $\MLL_X$, is sparse and is given by 
 \begin{equation}\label{small_footprint}
 \Bigl( \MLL_X\Bigr)_{j,k}  
 = 
 \begin{cases}  \opL b_{j}(x_k), &x_j \text{ near to }x_k,\\
 0,&\text{otherwise.}
 \end{cases}
 \end{equation}
 For certain point sets $X$ and kernels $\Phi$,  $\opL b_j$  is very close to  $\opL \chi_j$ even for relatively small stencils.
This  is a consequence of results in \cite{FHNWW}, which considers local Lagrange bases for certain CPD kernels on spheres (e.g., restricted surface splines), but  has been generalized to other kernels on other manifolds \cite{HNRW}.
Note that the local Lagrange approach is described in more detail in section \ref{S:Surface_splines}.
Another local technique is the radial basis function finite difference (RBF-FD) method, which has been used widely in many applications on surfaces and general Euclidean domains (e.g.~\cite{fornberg_flyer_2015,SHANKAR2014JSC, OANH, OLEG, tominec,SHANKAR2014JSC}).  This method also constructs a sparse DM, which we denote by $\MFD_X$, but it uses a different stencil-based kernel-collocation approach (as detailed in Section \ref{rbf_fd}). 

While these local methods are much more computationally efficient than the global method, it is more difficult to prove results about the spectral stability of their DMs because they do not have an underlying structure to exploit. Figure \ref{fig:example_spectra} shows that the spectra of DMs of both local methods are indeed more complicated than the global method.  
The approach we follow to study the spectral stability of the local DMs is to view them as perturbations of the the global DM, $\MM_X$, 
and to bound their spectra in terms of the spectrum of $\MM_X$.
  Indeed, both local methods recover $\MM_X$ in the limit that their stencils include every point in $X$. 
This  motivates  two problems:
 \begin{itemize}
 \item to determine the spectrum for the global DM $\MM_X$ when $\Phi$ is CPD
 \item to develop a perturbation theory for the spectrum of  kernel-based DMs
 \end{itemize}
Both of these problems are relevant beyond the specific motivation mentioned here. There are many reasons the DM might be perturbed (e.g. by small adjustments to the point set, or 
evaluation of the kernel, or by approximation of the differential operator, either 
from approximation or by  addition of higher order ``viscosity'' terms),
and it is common practice
to supplement the kernel with an auxiliary polynomial space and provide corresponding 
constraints \cite[Section 5.1]{fornberg_flyer_2015} -- in short, to treat a PD kernel as CPD. 

Our spectral stability analysis is limited to the local Lagrange DMs $\MLL_X$, where, for certain kernels and point sets $X$, recent results allow us to bound $\|\MLL_X - \MM_X\|$.  Analogous results are not yet available for RBF-FD DMs, but we give numerical evidence that a similar stability analysis may hold.

 \subsection{Objectives and Outline}
The main goals of this article are to produce a perturbation result for global kernel-based DMs on spheres for operators involving polynomials of $\Delta$, and
to employ this result to control the spectrum
of nearby local Lagrange DMs. For strictly PD kernels, 
the first goal is achieved by using a fairly straightforward application of the Bauer-Fike theorem.  However, the situation is considerably more challenging for CPD kernels, which produce 
DMs that are not obviously diagonalizable 
(and may have generalized eigenvectors).
However, this is the apposite case for working 
with local Lagrange bases on spheres. 
A final objective is to show that the 
local Lagrange DMs constructed from the restricted surface spline kernels on $\Sph^2$ can have positive spectra.

The outline for the remaining sections of the paper are as follows. 
In section 2, we establish notation and give necessary background for spheres, including the spherical Laplace-Beltrami operator, the operators for which our theory applies, PD and CPD kernels on spheres, and point sets.
Section 3 treats the perturbation of the global DMs
generated by PD kernels. The perturbation result follows by estimating the conditioning of the eigenbasis for the DM and applying the Bauer-Fike theorem. This approach can be generalized to other settings (i.e., other 
choices of kernel/manifold/differential operator) and we discuss how this might done at the end of section 3.

Section  \ref{S:CPD_Stability} introduces global DMs generated 
by CPD kernels.
In section \ref{SS_block_decomp}, we present
Lemma \ref{block_diag_lemma},
which shows that the
DM for a CPD kernel
is similar to a block-upper-triangular matrix
with diagonal principal blocks. 
This factorization is 
sufficient to provide
Proposition \ref{Generalized_BF}, which is
a version of the Bauer-Fike theorem for DMs from CPD kernels.
We then discuss
necessary and sufficient conditions for the DM to be
diagonalizable in section \ref{SS:diagDM}. 
Section \ref{SS_norms_of_block_decomp} provides estimates
of the ($\ell_2$) matrix norms of various blocks appearing in 
the decomposition of Lemma \ref{block_diag_lemma} and 
which appear as terms in Proposition \ref{Generalized_BF}.
Section \ref{SS:R_experiment} gives experimental evidence
which suggests it is possible to improve the estimates
of some of the matrix norms estimated in section 
\ref{SS_norms_of_block_decomp}, and therefore that the 
perturbation result Proposition \ref{Generalized_BF} can be tightened.

Section \ref{S:applications} provides some applications of the theory from Sections 3 and 4 as well as some numerical results. Energy stability of certain semi-discrete problems is given 
in section \ref{SS:energy_stability}. Section \ref{S:Surface_splines} applies the results of section 
\ref{S:CPD_Stability} to the local Lagrange DMs $\MLL_X$ constructed from restricted surface spline kernels on $\Sph^2$.
In this setting, we show that it is possible to consider a relatively sparse $\MLL_X$ as a perturbation of the global DM $\MM_X$ that has a closely matching spectrum. In particular, if the spectrum $\sigma(\MM_X)$ has strictly positive real part, then the spectrum of $\MLL_X$ will also have strictly positive real part for sufficiently large stencils or sufficiently dense point sets $X$.  We illustrate these theoretical results on $\MLL_X$ with some numerical examples.  Finally, we provide some numerical evidence that similar theoretical estimates may also hold for the RBF-FD DMs $\MFD_X$.

\section{Preliminaries}
\label{S:Prelim}
We denote the d-dimensional sphere as $\Sph^{d} = \{x\in \R^{d+1}\mid |x|= 1\}$ and let $\dist(x_1,x_2) := \arccos (x_1\cdot x_2)$ denote the Euclidean distance function for $x_1,x_2\in \Sph^{d}$. 
The sphere has the usual spherical coordinate 
parameterization 
$(\theta_1,\dots, \theta_{d})\mapsto (x_1,\dots,x_{d+1})$
by $\theta_1,\dots,\theta_{d}$ where $0\le \theta_j\le \pi$ for $j=1\dots d-1$
 and $0 \le \theta_{d}\le 2\pi$. Here 
 \begin{eqnarray*}
 x_1&=&\cos \theta_1\\
 x_j &=& \sin\theta_1\cdot \sin \theta_2\cdots \sin \theta_{j-1}\cdot \cos\theta_j 
 \quad
\text{ for }j=2\dots d,\\
 x_{d+1}&=&  \sin\theta_1\cdot \sin \theta_2\cdots \sin \theta_{d-1}\cdot \sin\theta_d.\end{eqnarray*}
The Laplace-Beltrami operator on $\Sph^{d}$ is a self-adjoint, negative semi-definite operator
defined recursively as
$$\Delta_{\Sph^{d}} = 
\frac{1}{\bigl(\sin(\theta_1)\bigr)^{d-1}}
\frac{\partial}{\partial  \theta_1}\bigl(\sin(\theta_1)\bigr)^{d-1} \frac{\partial}{\partial \theta_1}
+(\sin\theta_1)^{-2} \Delta_{\Sph^{d-1}}$$
where $\Delta_{\Sph^{d-1}}$ is the Laplace-Beltrami operator on the $\Sph^{d-1}$ sphere parameterized
by $\theta_2, \dots, \theta_{d}$.

For each $\ell\in \Nats$,
$\nu_\ell:=-\ell(\ell+d-1)$ is an eigenvalue of  the Laplace-Beltrami operator.
The corresponding eigenspace 
has an orthonormal
basis of $N_{\ell}:=
\frac{(2\ell+d-1)\Gamma(\ell+d-1)}{\Gamma(\ell+1)\Gamma(d)}$ 
eigenfunctions,
$\{Y_{\ell}^\mu\}_{\mu=1}^{N_{\ell}}$
{\em spherical harmonics} 
of degree $\ell$.
Thus $$\Delta Y_{\ell}^{\mu} = \nu_{\ell} Y_{\ell}^{\mu}.$$
We denote the space of spherical harmonics of degree
$\ell\le m$ as 
\begin{align}
\label{polyspace}
\Pi_m := 
\mathrm{span}\{Y_{\ell}^{\mu}\mid \mu\le N_{\ell}, \ell\le m\}
\end{align}
and let $M:=\dim{\Pi_m}=\frac{(2m+d)\Gamma(m+d)}{\Gamma(d+1)\Gamma(m+1)}$.  
The collection $(Y_{\ell}^{\mu})_{\ell\ge 0,\mu\le N_\ell}$ forms an 
orthonormal basis for $L_2(\Sph^{d})$.
%

\paragraph{Operator}
Throughout this article we use $\opL$ to denote  
an operator (not  differential, per se)
of the form 
$\opL = p(\Delta)$,
 where $p:\sigma(\Delta)\to \R$ 
 is a function which grows at most algebraically. 
We define $$\lambda_\ell :=p(\nu_\ell),$$ 
and note that $\opL$ is diagonalized by 
spherical harmonics: 
$\opL Y_{\ell}^{\mu}= \lambda_{\ell} Y_{\ell}^{\mu}$.
 
If $p$ is a polynomial of degree $k$,
then  $\opL$ is a differential
operator of order $2k$.
The most elementary choices
are the second order operators  $\opL = \Delta$ generated by 
$p(x)=x$  
and $\opL= 1-\Delta$ generated by $p(x) =1-x$,
but there are many 
higher order operators which arise in practice: $p(x) = x^2$
for the biharmonic, or $p(x) = 1-x^2$ for the linear part
of the Cahn-Hilliard equation as described in \cite{greer2006fourth},
or higher order powers of the Laplace-Beltrami $p(x) = \gamma x^j$
as considered for hyperviscosity terms added to differential operators
in order to stabilize kernel-based methods for certain time-dependent PDEs~\cite{shankar}.

\paragraph{Point sets} 
For $\Omega\subset \Sph^{d}$ and finite subset $X\subset \Omega$, 
we define the {\em fill distance} of $X$ in $\Omega$
as $$h:= \max_{x\in \Omega} \dist(x,X) = \max_{x\in \Omega}\min_{x_j\in X} \dist(x,x_j)$$
and the {\em separation radius} as
$$q:= \frac12\min_{x_j\in X}\min_{x_k\neq x_j}\dist(x_j,x_k).$$
In general, the cardinality $N=\#X$ of $X$ can be estimated above and below by
$c_1 h^{-d} \le N \le c_2 q^{-d}$.

A family of point sets  is called {\em quasi-uniform}
if there is a constant $\rho_0$
so that for every $X$ in the family, the ratio $\rho:= h/q$ is bounded by $\rho_0$. In this setting,
one has $c_1  \rho^{-d} q^{-d} \le N \le c_2 q^{-d}$, and thus $q\sim N^{-1/d}$. Although most of our results hold for general point sets without an a priori bound on the mesh ratio,
some portions (in particular sections 4.4 and 5.1) make use of quasi-uniformity. 

For numerical experiments, we employ some  benchmark families of 
equi-distributed subsets of $\Sph^2$ for our numerical experiments
in sections 4.5 and 5.2; namely, Fibonacci, minimum energy, maximum determinant, and Hammersley point sets
(illustrated in Figure \ref{fig:node}).
The first three of these sets are quasi-uniform, with separation radii $q$ that decay 
with the cardinality $N=\#X$ 
like $N^{-1/2}$, while the Hammersley points are highly unstructured; see~\cite{HMS16} for more information on all of these point sets.

\begin{figure}[h]
\centering
\begin{tabular}{cccc}
\includegraphics[width=0.22\textwidth]{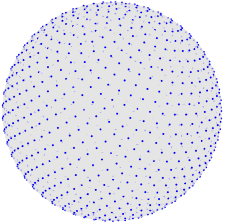} & \includegraphics[width=0.22\textwidth]{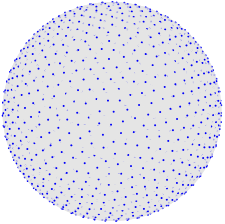} & \includegraphics[width=0.22\textwidth]{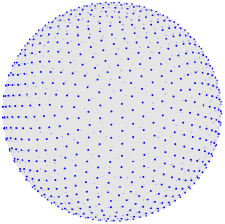} & \includegraphics[width=0.22\textwidth]{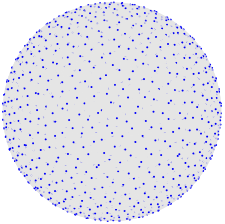} \\
{\small (a) Fibonacci} & {\small (b) Min energy} & {\small (c) Max determinant} & {\small (d) Hammersley}
\end{tabular}
\caption{Examples from the point set families on $\mathbb{S}^2$ used in the numerical experiments. (a) has $N=1025$ points, while for (b)--(d) have $N=1024$ points.\label{fig:node}}
\end{figure}

\paragraph{Conditionally positive definite kernels}
We consider kernels $\Phi:\Sph^{d}\times\Sph^{d}\to \R$ which are 
{\em conditionally positive definite} (CPD) of {\em order} $\m\in\Nats$
(i.e., CPD with respect to the spherical harmonic space $\Pi_{\m-1}$).
This means that for any point set $X\subset\Sph^2$, the {\em collocation matrix}
\begin{align}
\label{collocation_mat}
\PhiB_{X} 
:=\bigl(\Phi(x_j,x_k)\bigr)_{j,k=1\dots N} =
\begin{pmatrix}  \Phi(x_{1},x_{1}) & \dots & \Phi(x_{1},x_{N}) \\
\vdots & \ddots & \vdots  \\
 \Phi(x_{N},x_{1}) & \dots &  \Phi(x_{N},x_{N})
\end{pmatrix} 
\end{align}
 is 
strictly positive definite on the space of vectors 
$$
\left\{\alpha\in \R^N \  
\left\vert \  (\forall p\in \Pi_{\m-1})\, \sum_{j=1}^N \alpha_j p(x_j) 
= 0
\right\}\right..
$$
A kernel is {\em  positive definite} (PD)
if for every $X$, 
the collocation matrix \eqref{collocation_mat} is 
strictly PD on $\R^N$. 
Since such a kernel is CPD with respect to the trivial space
$\Pi_{-1}=\{0\}$, for ease of exposition we use CPD of order $\m=0$ to mean
PD.

For a given point set $X\subset \Sph^d$, define
\begin{align}
\label{trialspace}
S_{X}(\Phi) 
:= \left\{\sum_{j=1}^N a_j \Phi(\cdot, x_j)
 \, \middle|\,
 (\forall p\in \Pi_{\m-1}),\ 
 \sum_{j=1}^N a_j p(x_j)=0\right\}
 +
 \Pi_{\m-1}.
\end{align}

The kernels we consider in this paper are {\em zonal}, which means
that they have a Mercer-like expansion
\begin{equation}\label{HS}
\Phi(x,y) =\sum_{\ell=0}^{\infty} \sum_{\mu\le N_{\ell}} c_{\ell}Y_{\ell}^{\mu}(x) Y_{\ell}^{\mu} (y)
\end{equation}
which is absolutely and uniformly convergent. 
By the addition theorem for spherical harmonics \cite[Theorem 2]{Muller},
$\sum_{\mu\le N_{\ell}} Y_{\ell}^{\mu}(x) Y_{\ell}^{\mu} (y) =\frac{N_{\ell}}{\mathrm{vol}(\Sph^d)  }P_{\ell}(x\cdot y)$,
where  $P_{\ell}$ is an algebraic polynomial of degree $\ell$ 
with $P_{\ell}(1)=1$  and $|P_{\ell}(x)|\le 1$ for all $-1\le x\le 1$ 
by \cite[Lemma 9]{Muller}.\footnote{The functions $P_{\ell}$ are orthogonal with respect to the weight $(1-x^2)^{\frac{d-2}{2}}$, and are thus
 proportional to a Gegenbauer polynomials $C_{\ell}^{(\frac{d-1}{2})}$ as defined in \cite[Chapter 22]{AS}.}
It follows that a series of the form (\ref{HS}) converges uniformly and absolutely on $\Sph^d\times \Sph^d$ if and only if
the coefficients $c_{\ell}$ satisfy
$$\frac{1}{\mathrm{vol}(\Sph^d)}
\sum_{\ell=0}^{\infty} |c_{\ell}| \frac{(2\ell+d-1)\Gamma(\ell+d-1)}{\Gamma(\ell+1)\Gamma(d)}
\sim \sum_{\ell=0}^{\infty}(1+ \ell^{d-1}) |c_{\ell}|<\infty.$$
Here we have used the fact that $N_{\ell} =  \frac{(2\ell+d-1)\Gamma(\ell+d-1)}{\Gamma(\ell+1)\Gamma(d)} \le C_d \ell^{d-1}$
for $\ell\ge 1$ and for a $d$-dependent constant $C_d$.

Note that, by \cite[Theorem 4.6]{menegatto2004conditionally}, 
an expansion of the form (\ref{HS})
yields a  
kernel which is CPD of order $\m$ if and only 
the expansion coefficients  satisfy $c_{\ell}\ge 0$ for all  $\ell\ge \m$, and 
$c_{\ell}>0$ for infinitely many odd and infinitely 
many even values of $\ell$ (the 
PD
case (i.e., $\m=0$) was given in 
\cite[Theorem 3]{chen2003necessary}).
However, most 
CPD
kernels of interest have the property that 
$c_{\ell}>0$ for all $\ell\ge \m$.
We direct the reader to \cite{baxterhubbert} for a number of prominent examples of 
kernels (both PD and CPD)
with precisely calculated coefficients $c_{\ell}$.

\paragraph{Compatibility assumption between $\Phi$ and $\opL$}

For a kernel $\Phi$, and a differential operator $\opL$, 
we adopt the convention that 
 $\opL\Phi(x,y)$ means that the operator is applied to the first argument, i.e.,
 $\opL\Phi(x,y) = \opL^{(1)}\Phi(x,y)$.
 
 The following assumption  guarantees
that $\Phi$ is CPD and
that $\opL\Phi$ is a zonal kernel.
It is in force throughout this paper.
\begin{assumption}\label{PD_assumption}
Let  $\Phi:\Sph^d\times\Sph^d\to \R $ denote 
a  kernel  
with a Mercer-like expansion (\ref{HS}),
which is CPD of order $\m\ge 0 $ (where $\m=0$ means $\Phi$ is PD).
Let $\opL = p(\Delta)$ be a linear operator 
 obtained by  $p:\sigma(\Delta) \to \R$ 
 so that the eigenvalues $\lambda_{\ell}= p(\nu_{\ell})$
 grow at most algebraically, and that
$\sum_{\ell=0}^{\infty} 
c_{\ell} |\lambda_{\ell}|\ell^{d-1}<\infty$.
\end{assumption}

Note that  some of these assumptions have already been mentioned earlier in this section -- the critical 
hypothesis is the compatibility condition between $\Phi$ and $\opL$,
namely  $\sum_{\ell=0}^{\infty} 
c_{\ell} |\lambda_{\ell}|\ell^{d-1}<\infty$.
This condition can be further  simplified if $\opL$ is a differential operator of order $2L$
(in other words, if $p$ is polynomial of degree $L$ or less). 
The requirement that 
$\sum_{\ell=0}^{\infty} c_{\ell} |\lambda_{\ell}| \ell^{d-1}$ 
converges  can be expressed in this case as  
$\sum_{\ell=0}^{\infty}c_{\ell} \ell^{2L +d-1}<\infty$.

\paragraph{Miscellaneous notation}
Throughout the paper, we let $C$ (with or without subscripts) denote a generic positive constant.
To denote matrices, we use bold upper case letters. We also use $\AA\mapsto \|\AA\|$ to denote the induced $\ell_2$ matrix norm.

\section{Spectral stability for PD kernels}
\label{S:PD}

If Assumption \ref{PD_assumption} holds with $\m=0$,
we define the global DM $\MM_{X}$ 
for the
finite point set $X\subset \Sph^d$ 
as the  unique matrix  in $\R^{N\times N}$ which satisfies the identity
$
\MM_{X} \Bigl(
 u{\big|}_X\Bigr)= \bigl( \opL u\bigr){\big|}_X
$
for all $u\in S_X(\Phi)$.
It has  the form 
\begin{equation}\label{DM_PD_def}
\MM_X=\K_X \PhiB_X^{-1},
\end{equation}
where 
$\PhiB_X$ is the collocation matrix \eqref{collocation_mat}
and 
$$\K_X = \Bigl(\opL \Phi(x_j,x_k)\Bigr)_{j,k}.$$ 
It follows that $\K_{X}$ is  a 
symmetric matrix generated by sampling
$
\Psi =\opL \Phi
$.

We can thus conclude that 
the spectrum of $\MM_X$  is the same as that of the symmetric matrix
$\PhiB_X^{-1/2} \K_X \PhiB_X^{-1/2}$.
In particular, if  $\opL$ has non-negative spectrum (each $\lambda_{\ell}\ge 0$),
then $\K_{X}$ is positive semi-definite, so $\sigma(\MM_X)\subset[0,\infty)$.
Furthermore, if
$\lambda_{\ell}c_{\ell}>0$ for infinitely many odd and infinitely many even $\ell$,
then $\K_{X}$ is strictly positive definite, and $\sigma(\MM_X)\subset (0,\infty)$.

%
%
%
\subsection{Conditioning of the eigenbasis of $\MM_X$}
\label{SPD_Case}
While the above setup shows that $\MM_X$ is diagonalizable 
and we have control on $\sigma(\MM_X)$,
the eigenvectors of $\MM_X$  may be far from orthogonal. 
To understand the conditioning of the eigenbasis of $\MM_X$, 
we consider the matrix 
$$\NB_X:=\PhiB_X^{-1/2} \MM_X \PhiB_X^{1/2},$$
which, by the above factorization, satisfies 
$\NB_X =\PhiB_X^{-1/2} \K_X \PhiB_X^{-1/2} $, 
and is therefore symmetric.

\begin{proposition}
\label{PD_perturbation}
Suppose Assumption \ref{PD_assumption} holds with $\m=0$.
If
 $X\subset \Sph^d$ has cardinality $\#X=N$
and  if $\MM^{\epsilon}\in \R^{N\times N}$ 
then for every eigenvalue  $\lambda^{\epsilon}$
of $\MM^{\epsilon}$, 
there is an eigenvalue $\lambda$ of $\MM_X$ with
$$
|\lambda-\lambda^{\epsilon}| 
\le 
\mathrm{cond} (\PhiB_X^{1/2})
\|\MM_X-\MM^{\epsilon}\|
$$
where 
$
\mathrm{cond} (\PhiB_X^{1/2}) = 
\| \PhiB_X^{1/2}\| \|\PhiB_X^{-1/2} \|
$  
is the $\ell_2$ condition number of $\PhiB_X^{1/2}$.
\end{proposition}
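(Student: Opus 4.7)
The plan is to read this as a direct application of the Bauer--Fike theorem, with the nontrivial point being the identification of the conditioning of an eigenbasis of $\MM_X$ with $\mathrm{cond}(\PhiB_X^{1/2})$. The groundwork has already been laid by introducing the similarity
$$\NB_X = \PhiB_X^{-1/2}\MM_X\PhiB_X^{1/2} = \PhiB_X^{-1/2}\K_X\PhiB_X^{-1/2},$$
which is symmetric because $\K_X$ is symmetric (it is generated by the zonal, hence symmetric, kernel $\opL\Phi$). The proof essentially consists of unpacking this.

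First I would use the spectral theorem to orthogonally diagonalize $\NB_X = \QQ\LLambda\QQ^T$ with $\QQ$ orthogonal and $\LLambda$ real diagonal. Substituting back gives the eigendecomposition
$$\MM_X = \PhiB_X^{1/2}\NB_X\PhiB_X^{-1/2} = \VV\LLambda\VV^{-1}, \qquad \VV := \PhiB_X^{1/2}\QQ.$$
Next I would compute the condition number of this eigenbasis. Since $\QQ$ is orthogonal, it is an $\ell_2$-isometry, so $\|\VV\| = \|\PhiB_X^{1/2}\QQ\| = \|\PhiB_X^{1/2}\|$ and similarly $\|\VV^{-1}\| = \|\QQ^T\PhiB_X^{-1/2}\| = \|\PhiB_X^{-1/2}\|$. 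Consequently
$$\mathrm{cond}(\VV) = \|\PhiB_X^{1/2}\|\,\|\PhiB_X^{-1/2}\| = \mathrm{cond}(\PhiB_X^{1/2}).$$

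Finally I would write $\MM^{\epsilon} = \MM_X + (\MM^{\epsilon} - \MM_X)$ and invoke the Bauer--Fike theorem: for any eigenvalue $\lambda^{\epsilon}$ of $\MM^{\epsilon}$, there exists $\lambda \in \sigma(\MM_X)$ such that $|\lambda-\lambda^{\epsilon}| \le \mathrm{cond}(\VV)\,\|\MM_X-\MM^{\epsilon}\|$, which is exactly the claimed bound.

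There is no real obstacle here; the proposition is essentially the observation that when $\MM_X$ is similar to a symmetric matrix via $\PhiB_X^{\pm 1/2}$, the relevant eigenbasis conditioning is controlled by $\mathrm{cond}(\PhiB_X^{1/2})$ rather than by an unrelated eigenvector matrix. The only step requiring even mild care is noting that premultiplying/postmultiplying by the orthogonal factor $\QQ$ does not alter the $\ell_2$ operator norms of $\PhiB_X^{\pm 1/2}$, so the condition number of the eigenbasis cleanly collapses to $\mathrm{cond}(\PhiB_X^{1/2})$.
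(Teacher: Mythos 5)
Your proof is essentially identical to the paper's: both diagonalize the symmetric matrix $\NB_X$ orthogonally, conjugate by $\PhiB_X^{1/2}$ to obtain an eigenbasis $\VV = \PhiB_X^{1/2}\UU$ for $\MM_X$, bound $\mathrm{cond}(\VV)$ by $\mathrm{cond}(\PhiB_X^{1/2})$, and invoke Bauer--Fike. The only cosmetic difference is that you observe the orthogonal factor gives exact equality $\|\VV\| = \|\PhiB_X^{1/2}\|$ rather than the inequality the paper writes down, which is a correct (if marginal) sharpening of the same argument.
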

\begin{proof}
By the above comment, 
$\NB_X= \UU \DD \UU^{T}$ 
with $\UU$ an orthogonal matrix.
Hence, we have a diagonalization of the form
\begin{eqnarray*}
\MM_X 
= 
\PhiB_X^{1/2} \NB_X \PhiB_X^{-1/2} 
=
 \VV \DD \VV^{-1}
\end{eqnarray*}
with $\VV :=  \PhiB_X^{1/2}  \UU$.
As $\UU$ is orthogonal, 
$\mathrm{cond}(\VV) = \|\VV\|   \| \VV^{-1}\|$
satisfies
$$
\mathrm{cond}(\VV) 
\le 
\| \PhiB_X^{1/2}\|  \|\PhiB_X^{-1/2} \| 
\|\UU\|  \|\UU^{-1} \|
=\mathrm{cond} (\PhiB_X^{1/2}).
$$
The result then follows by an application of
 the Bauer-Fike theorem \cite{BF}.
\end{proof}

In case $\Phi$ has coefficients which have prescribed algebraic decay $c_{\ell} \sim |\nu_{\ell}|^{-m}$,
it is possible to control the smallest eigenvalue of the collocation matrix by a power of 
the separation radius $q$.  
This situation corresponds to a number of kernels with Sobolev native spaces, including
compactly supported kernels and restricted Mat{\'e}rn kernels, as described in \cite[Appendix]{EHNRW}.

\begin{corollary}
\label{PD_cor}
Suppose the hypotheses of Proposition \ref{PD_perturbation} hold, and
the expansion coefficients $c_{\ell}$ of $\Phi$ satisfy
 $ 
 c_{\ell} \sim |\nu_\ell|^{-m}
 $.
 Then
 for every eigenvalue  $\lambda^{\epsilon}$
of $\MM^{\epsilon}$, 
there is an eigenvalue $\lambda$ of $\MM_X$ with
$$
|\lambda-\lambda^{\epsilon}| 
\le 
C q^{-m} 
\|\MM_X-\MM^{\epsilon}\|.
$$
\end{corollary}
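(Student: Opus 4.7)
The plan is to reduce the corollary directly to Proposition \ref{PD_perturbation} by bounding the condition number $\mathrm{cond}(\PhiB_X^{1/2})$ in terms of the separation radius $q$. Since $\PhiB_X$ is symmetric positive definite, we have
$$\mathrm{cond}(\PhiB_X^{1/2}) = \|\PhiB_X^{1/2}\|\,\|\PhiB_X^{-1/2}\| = \sqrt{\lambda_{\max}(\PhiB_X)/\lambda_{\min}(\PhiB_X)},$$
so the task reduces to controlling the extreme eigenvalues of $\PhiB_X$ under the assumption $c_\ell \sim |\nu_\ell|^{-m}$.

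For the upper eigenvalue, I would use that the zonal kernel $\Phi$ is continuous (guaranteed once $2m > d-1$, which is implicit in the summability of the Mercer expansion under the decay hypothesis), so $|\Phi(x,y)|$ is bounded by a constant $B$ depending only on $\Phi$. A diagonal Gershgorin bound, or the cruder estimate $\lambda_{\max}(\PhiB_X)\le \mathrm{tr}(\PhiB_X)$, gives $\lambda_{\max}(\PhiB_X) \le C$ (or at worst $\le C N$), so this contributes at most a polynomial-in-$q$ factor that is absorbed into the eventual $q^{-m}$. For the lower eigenvalue, which is the substantive step, I would invoke the Narcowich--Ward-type lower bounds for zonal kernels with Sobolev native space tabulated in \cite{EHNRW}: under $c_\ell \sim |\nu_\ell|^{-m}$, one has
$$\lambda_{\min}(\PhiB_X) \ge c\, q^{2m - d_0}$$
for an appropriate exponent $d_0$ (tied to the trace scaling of the kernel), with $c$ depending only on $\Phi$ and $d$. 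Combining the two estimates, the powers of $q$ arising from the trace-type upper bound and the Narcowich--Ward lower bound cancel so as to yield
$$\mathrm{cond}(\PhiB_X^{1/2}) \le C\, q^{-m}.$$
Substituting this into the conclusion of Proposition \ref{PD_perturbation} gives the claim.

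The main obstacle, and the only non-bookkeeping step, is matching the exponents in the upper and lower eigenvalue bounds so that the square root produces exactly $q^{-m}$ rather than some other power. Concretely, I would need to be careful that the normalization of the kernel (through $\Phi(x,x) = \sum_\ell c_\ell N_\ell/\mathrm{vol}(\Sph^d)$) and the scaling used in the cited Narcowich--Ward estimate are the same, and that the exponent $m$ in the hypothesis $c_\ell \sim |\nu_\ell|^{-m}$ matches the smoothness index in that estimate. Once these are aligned with the conventions adopted in \cite{EHNRW}, the proof is a one-line application of Proposition \ref{PD_perturbation}.
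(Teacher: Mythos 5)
Your approach is the same as the paper's: bound $\mathrm{cond}(\PhiB_X^{1/2})$ by controlling the extreme eigenvalues of $\PhiB_X$ and feed this into Proposition \ref{PD_perturbation}. One wrinkle to correct: neither the Gershgorin bound nor the trace bound gives $\lambda_{\max}(\PhiB_X)\le C$ --- both give $\lambda_{\max}(\PhiB_X)\le CN\le Cq^{-d}$ (the diagonal entry $\Phi(x,x)$ is a nonzero constant, so the trace is $\sim N$, and the off-diagonal row sums are $O(N)$ as well). The $CN$ bound is in fact the one you need: combined with the lower bound $\lambda_{\min}(\PhiB_X)\ge cq^{2m-d}$ from \cite[Lemma 4.2]{EHNRW}, it gives $\mathrm{cond}(\PhiB_X^{1/2}) \le \sqrt{Cq^{-d}/(cq^{2m-d})}=C'q^{-m}$ with the powers of $q^{\pm d/2}$ cancelling exactly. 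Had $\lambda_{\max}\le C$ been true you would have obtained the stronger $q^{d/2-m}$, which is not what the corollary asserts and is in fact false.
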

\begin{proof}
By \cite[Lemma 4.2]{EHNRW}, 
the decay of the coefficients $c_\ell$ allows us
to bound
$\|\PhiB_X^{-1/2}\|$ 
by $Cq^{d/2-m}$,  while 
$
\|\PhiB_X^{1/2}\| \le \|\PhiB_X\|^{1/2} 
\le Cq^{-d/2}
$ 
since $\Phi$ is bounded and $N \le Cq^{-d}$.
Thus 
$$
\mathrm{cond}(\PhiB_X^{1/2}) \le Cq^{-m}
$$
 and the corollary follows.
\end{proof}
Note that if $ c_{\ell} \sim |\nu_\ell|^{-m}$ and $p$ is a polynomial of degree $L$ or less, 
so $\opL$ is a differential operator of
order $2L$, then
the summability condition $\sum_{\ell=0}^{\infty} c_{\ell}|\lambda_{\ell}|\ell^{d-1}<\infty$ from
Assumption \ref{PD_assumption}
is equivalent to 
$L<m-d/2$.

\subsection{Generalizations to other settings}
The above setup can be generalized considerably to other manifolds 
$\M$ and operators $\opL$.
We begin by assuming that the underlying set $\M$ 
is a metric space with distance function
$\dist:\M\times\M\to [0,\infty)$.

If $\phi_j:\M\to \R$ 
is one of a countable collection of continuous functions whose span is 
dense in $C(\M)$ then any series
\begin{equation}
    \label{HS_M}
\Phi(x,y) = \sum_{j=0}^\infty c_j \phi_j (x) \phi_j(y)
\end{equation}
which converges absolutely and uniformly 
is PD
if $c_j>0$
for every $j$. 
In particular, the collocation matrix 
$\PhiB_X = (\Phi(x_j,x_k))_{j,k}$
is positive definite for any finite set $X\subset \M$.

\bigskip

If $\opL$ is a linear operator diagonalized by the $\phi_j$'s, 
i.e., $\lambda_j \phi_j = \opL \phi_j$ 
and for which the series
$\Psi(x,y) =\sum_j c_j \lambda_j \phi_j (x) \phi_j(y)$
converges absolutely and uniformly, then the DM 
 has the form $\MM_X= \K_X \PhiB_X^{-1}$
where $\K_X =(\Psi(x_j,x_k))_{j,k}$ is symmetric,
and
the perturbation
result of Proposition
\ref{PD_perturbation} holds in this setting. 

\paragraph{Example 1: (intrinsic) compact Riemannian Manifolds}
A common setup involves a  compact, $d$-dimensional  
 Riemannian manifold $(\M,g)$ endowed
 with Laplace-Beltrami operator, given in intrinsic coordinates as
 $$
 \Delta_{\M}
 =
 \frac{1}{\sqrt{\det g(x)}} 
 \sum_{j,k} 
 \frac{\partial}{\partial x_k} 
 \Bigl(\sqrt{\det g(x)} g^{jk}\frac{\partial}{ \partial x_j}\Big).
 $$
 In this case, there exists 
 a countable orthonormal basis  $(\phi_j)$  
 of eigenvectors of $\opL$ 
 with corresponding eigenvalues $\dots \le \nu_1 < \nu_0 = 0$.
We may assume each $\phi_j$ 
to be real-valued.

In this setting, we may again let $\opL = p(\Delta)$ for a function
defined on $\sigma(\Delta_{\M})$.
The requirement of absolute and uniform convergence
of the series $\sum_{j=1}^{\infty} \lambda_j c_j \phi_j(x)\phi_j(y)$
may be simplified by using Weyl laws to control the growth of $|\nu_j|$. 
Estimates in the uniform norm of eigenfunctions can be obtained in specific 
cases, see e.g. \cite[Theorem 2.1]{SOGGE1988} or \cite{donnelly2001bounds}.

There are two  immediate challenges to this approach. 
The first comes purely from adapting  Corollary \ref{PD_cor}.
In order  to estimate the condition number of the eigenbasis 
in terms of 
the separation radius $q$.
As in the case $\M=\Sph^d$, this can be controlled by  estimating the minimal eigenvalue
 of the collocation matrix $\PhiB_X$. 
 To control this eigenvalue  one may attempt to adapt \cite[Lemma 4.2]{EHNRW},
although the much more general spectral theoretic arguments of \cite{GRZ}, specifically
 \cite[Proposition 8]{GRZ} may be more easily adapted to this setting.
 
The second challenge stems form using the kernel $\Phi$ and the DM $\MM_X$.
Although nicely defined by a series, the kernel $\Phi$
may not have a convenient closed-form representation. 
This challenge can be addressed if the eigenfunctions $\phi_j$ appearing in (\ref{HS_M}) are known.
If this is the case, it may be suitable to truncate the Mercer-like series at a sufficiently large threshold $N$.
Of course,  truncating the kernel imposes some error which would need to be managed (for instance
by way of perturbation results like Corollary \ref{PD_cor}).

\paragraph{Example 2: embedded compact Riemannian Manifolds}
If $\M\subset \R^{d+k}$ is a compact Riemannian manifold
embedded in Euclidean space, we may follow \cite{fuselierwright_embedded} 
by considering
a radial basis function (RBF)
 $\varphi:\R^{d+k}\to \R$,
 namely a  function which is symmetric under rotations
(i.e., $\varphi(x) = \tau(|x|)$ for some function $\tau:[0,\infty)\to\R$)
and has Fourier transform which satisfies
$C_1 (1+|\xi|^2)^{-m}\le \widehat{\varphi}(\xi)\le 
C_2 (1+|\xi|^2)^{-m} $
for some $0<C_1\le C_2<\infty$. 

In that case, $\Phi:\M\times \M\to\R:(x,y) \mapsto \phi(x-y)$ is a PD kernel on $\M$. 
By Mercer's theorem, 
it has an absolutely and uniformly convergent
expansion of the form (\ref{HS_M}), where
the functions $\phi_j$ are eigenfunctions of the integral
operator $f\mapsto \int_{\M} f(x) \Phi(\cdot,x)\dif x$. 

The challenge in this case is that
closed form expressions for the eigenfunctions 
$\phi_j$'s and coefficients $c_j$ are not known in general  (see 
\cite{santin2016approximation} for an approach to 
approximate these). 
As a result, it is difficult to identify 
the operators $\opL$
diagonalized by $\phi_j$ which are necessary 
to apply Proposition \ref{PD_perturbation}.

In contrast to the case of Example 1, 
 \cite[Theorem 12.3]{Wend} guarantees that
the collocation matrix $\PhiB_X $ has minimal eigenvalue
$\lambda_{\min}(\PhiB_X) \ge C q^{2m - (d+k)/2}$,
so 
$
\|\PhiB_X^{-1}\|_
\le C q^{(d+k)/2 -2m}$. 
On
the other hand 
$
\|\PhiB_X\|_2
\le
\sqrt{N} \|\Phi\|_{\infty} \le C q^{-d/2}$ 
is guaranteed
by the continuity of $\Phi$ 
and the compactness of $\M$,
so an analogue of Corollary \ref{PD_cor} holds in this case
with estimate
$$|\lambda -\lambda^{\epsilon}|\le Cq^{k/2-m}
\|\MM_X - \MM^{\epsilon}\|.
$$

%
%
%
%
\section{Spectral stability for CPD kernels}
\label{S:CPD_Stability}
We now present our main results for 
global DMs constructed using CPD kernels.
In section \ref{SS_block_decomp} 
we show that $\MM_X$ is similar to 
the sum of a diagonal matrix
and nilpotent matrix 
with a single nonzero $(1,2)$ block. 
This is Lemma \ref{block_diag_lemma}.
In section \ref{SS:gBF} we give a preliminary version 
of the Bauer-Fike theorem which treats the block triangular
factorization.
Section \ref{SS:diagDM} 
considers the full diagonalization of $\MM_X$
under mild assumptions on the operator $\opL$.
We analyze the norms of various factors
appearing in the factorization in Section \ref{SS_norms_of_block_decomp}.

\subsection{DM block decomposition}
\label{SS_block_decomp}
 For a CPD kernel $\Phi$ of order $\m>0$ and operator $\opL$ 
 which satisfy Assumption \ref{PD_assumption},
 and for a  point set $X\subset \Sph^d$,
 define the DM $\MM_{X}$ so that  
 $
 \MM_{X} \Bigl(u\left|_{X}\right.\Bigr) = \bigl( \opL u\bigr)\left|_{X}\right. 
 $
 holds for all $u\in S_{X}(\Phi) $.
 Then $\MM_{X}$ can  be expressed as 
 \begin{equation}
 \label{FD_def}
 \MM_{X} 
 =
 \bigl( \opL \chi_{j}(x_k)\bigr)_{j,k} 
  = 
  \K_{X} \AA 
  + \mathbf{ P}\LLambda \BB,
 \end{equation}
 where
 $
 \K_{X}  = \Bigl(\opL\Phi(x_j,x_k)\Bigr)_{j,k}
 $ 
 and 
 $\mathbf{P} = \bigl(p_{j}(x_k)\bigr)_{j\le M}$ 
 is an $N\times M$ Vandermonde
 matrix associated with a basis $\{p_j\}$ for $\Pi_{\tilde{m}-1}$ (see \eqref{polyspace}). 
 Here we take 
 $p_{j}=  Y_{\ell}^{\mu}$, 
 for a suitable enumeration $(\ell,\mu) \leftrightarrow  j$, %
 so that $\LLambda$ is the diagonal matrix such that
 $ 
 \mathbf{ P}\LLambda =\Bigl( \lambda_{\ell} \, p_j(x_k) \Bigr)_{k,j}
 $.
 The matrices 
 $\AA$
 and $\BB$ occur as solutions 
 to the augmented kernel collocation problem
 \begin{equation}
 \label{aug_coll_problem}
 \begin{pmatrix} 
      \PhiB_{X} &\mathbf{P}\\ \mathbf{P}^T &  \zero
 \end{pmatrix}
 \begin{pmatrix}\AA\\
 \BB\end{pmatrix}
 = 
 \begin{pmatrix}\Id_{N}\\  \zero \end{pmatrix},
 \end{equation}
 where 
 $
 \PhiB_{X} 
 :=
 \Bigl(\Phi(x_j,x_k)\Bigr)_{j,k}
 $ 
 is the kernel collocation matrix. 
 The kernel collocation matrix is a saddle-point matrix and we 
 will exploit this structure and known facts for those matrices 
 as they can be found for instance in \cite{BGL} later on.

Let $\PP^{\dagger}:= (\PP^T\PP)^{-1}\PP^T$ be the standard left inverse 
of $\PP$ and $\QQ = \Id_N-\PP\PP^{\dagger}$ 
be the orthogonal projector onto the space $\bigl(\text{range}(\PP)\bigr)^{\perp} $ where
$$
\bigl(\text{range}(\PP)\bigr)^{\perp} 
= 
\Pi_{\m-1}(X)^{\perp} 
= 
\Bigl\{a\in\R^N\mid 
\sum_{j=1}^N
a_j p(x_j)\; \text{for all }p\in \Pi_{\m-1}(\R^d)\Bigr\}.
$$
Let $\WW\in \R^{N\times(N-M)}$ be a matrix whose columns 
form an orthonormal basis for $\Pi_{\m-1}(X)^{\perp}$.
Then $\WW \WW^T =\QQ$.

By \cite[(3.8)]{BGL}, we can write
\begin{align}
\AA&=\WW(\WW^T \PhiB_X \WW)^{-1}\WW^T \label{A_coeffs}\\
\BB &= \PP^{{\dagger}} (\Id_N - \PhiB_X \AA). \label{B_coeffs}
\end{align}
The first equation shows that $\AA$  is positive semi-definite
and that
\begin{equation}
\label{A_proj_identity}
 \AA =
 \QQ \AA \QQ = 
  \AA \QQ = 
\QQ \AA.
\end{equation}
From  (\ref{B_coeffs}), we can begin to decompose $\MM_X$, obtaining first
\begin{eqnarray}
\MM_X
&=& 
\K_X \AA + \PP\LLambda  \PP^{\dagger}(\Id_N - \PhiB_X \AA)\nonumber\\
&=& 
\PP\LLambda  \PP^{\dagger} 
+ 
(\K_X - \PP \LLambda \PP^{\dagger} \PhiB_X)\AA.
\label{first_block}
\end{eqnarray}
Adding and subtracting
$\PP \PP^{\dagger} \K_X$,
we write the second term in (\ref{first_block}) as
\begin{align*}
 (\K_X &- \PP \LLambda \PP^{\dagger} \PhiB_X)\AA
=
(\K_X  - \PP \PP^{\dagger} \K_X ) \AA 
+ 
\PP
\Bigl( 
    \PP^{\dagger} \K_X-\LLambda  \PP^{\dagger} \PhiB_X
\Bigr)
\AA.
\end{align*}
The first term in this expression is $\QQ\K_X \AA$, 
which equals $\QQ\K_X\AA\QQ$ by (\ref{A_proj_identity}).
This gives the identity:
\begin{equation}
\label{first_decomposition}
\MM_X = \PP\LLambda \PP^{\dagger} 
+
 \QQ\K_X \AA\QQ
+
\PP
\Bigl( 
    \PP^{\dagger} \K_X-\LLambda  \PP^{\dagger} \PhiB_X
\Bigr)
\AA.
\end{equation}

The first term in (\ref{first_decomposition}) 
corresponds to a diagonalizable block. 
Indeed, restricting $\MM_X$ to the invariant subspace $\Pi_{m-1}(X)$
yields   
$\MM_X v = \PP\LLambda  \PP^{\dagger} v$ 
for 
$v\in \Pi_{\m-1}(X)$. 

The second term in (\ref{first_decomposition}) can also be viewed  as a  block of $\MM_X$, 
this time restricted to $\Pi_{\m-1}(X)^{\perp}$ (recall that $\AA$ annihilates $\Pi_{\m-1}(X)$).
Indeed,
$\QQ\K_X \AA\QQ$ 
can be diagonalized, as we did in section \ref{SPD_Case}.
This can be made more explicit with a change of basis using $\WW$.

\paragraph{Change of basis} 
For a matrix $\CC\in \R^{N\times N}$, let 
\begin{equation}\label{COB}
\widehat{\CC} 
:= \WW^T \CC \WW\in
\R^{( N-M)\times (N-M)}
\end{equation}
Recall that the columns of $\WW$ form an 
orthonormal basis for $\Pi_{\m-1}(X)^{\perp}$. Thus 
$\widehat{\PhiB_X} = \WW^T \PhiB_X \WW$ and 
its inverse 
$\widehat{\AA} = \WW^T \AA \WW = (\WW^T \PhiB_X \WW)^{-1}$ 
are both positive definite.

By the same reasoning, $\widehat{\K}_X = \WW^T \K_X \WW$ 
is symmetric, and if $\lambda_{\ell}c_\ell\ge 0$ for $\ell\ge \m$, then 
$\widehat{\K}_X $ is positive semi-definite.
If, in addition,
$\lambda_{\ell}c_{\ell}\neq 0$ for infinitely many even values
of $\ell$ and infinitely many odd values of $\ell$,
then 
 \cite[Theorem 4.6]{menegatto2004conditionally} guarantees that
$\widehat{\K}_X $ is strictly positive definite
(this occurs, for instance, if $\opL=p(\Delta)$ and $p$ is a polynomial).

This yields  the following result.
%
%
\begin{lemma}
\label{block_diag_lemma}
  If Assumption \ref{PD_assumption}
  holds with $\m>0$
and  $X\subset \Sph^{d}$ is finite,
then
the DM
$\MM_X$ has factorization
$$
\MM_X= 
\VV 
\begin{pmatrix} \LLambda &\RR\\ \zero& \TTheta\end{pmatrix} 
\VV^{-1}$$
where $\TTheta\in \R^{(N-M)\times (N-M)}$ 
 and
$\LLambda\in \R^{M\times M}$ are
diagonal, and each  entry
$\LLambda_{j,j} $
is
determined by the spectrum of $\opL$ on $\Pi_{\m-1}$; i.e.,
with 
$\LLambda_{j,j} = \lambda_{\ell}$ where $p_j=Y_{\ell}^{\mu}$.

If $\lambda_{\ell}c_{\ell}\ge 0$ for all $\ell\ge \m$, then each diagonal entry of $\TTheta$ is non-negative, and if, furthermore,
$\lambda_{\ell}c_{\ell}\neq 0$ for infinitely many even and infinitely many odd values of $\ell$, 
then each diagonal entry of
 $\TTheta$ is strictly positive. 
\end{lemma}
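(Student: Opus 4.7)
The starting point is the identity \eqref{first_decomposition} already derived in the excerpt,
$$
\MM_X = \PP\LLambda\PP^{\dagger} + \QQ\K_X\AA\QQ + \PP\bigl(\PP^{\dagger}\K_X - \LLambda\PP^{\dagger}\PhiB_X\bigr)\AA,
$$
whose three summands respect the splitting $\R^N = \text{range}(\PP) \oplus \text{range}(\WW) = \Pi_{\m-1}(X) \oplus \Pi_{\m-1}(X)^{\perp}$: the first maps $\text{range}(\PP)$ into itself and vanishes on $\text{range}(\WW)$; the second is the reverse, using $\QQ\PP=\zero$ and the constraint $\AA\PP=\zero$ from \eqref{aug_coll_problem}; the third maps $\text{range}(\WW)$ into $\text{range}(\PP)$ and annihilates $\text{range}(\PP)$. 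The plan is to build an invertible $\VV$ adapted to this splitting so that conjugation by $\VV$ simultaneously diagonalizes the first two summands and isolates the third in the upper-right block $\RR$.

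For the upper-left block, I take the columns of $\PP$ themselves: since the spherical harmonics $\{p_j\}=\{Y_\ell^\mu\}$ satisfy $\opL p_j = \lambda_\ell p_j$, the first summand already acts as $\LLambda$ on $\text{range}(\PP)$. For the lower-right block, the identities $\QQ\WW = \WW$ and $\AA\WW = \WW\widehat{\PhiB_X}^{-1}$ (read off from \eqref{A_coeffs}) imply that the second summand acts on $\text{range}(\WW)$, in the $\WW$-basis, as $\widehat{\K}_X\widehat{\PhiB_X}^{-1}$. This product is not symmetric, but it is similar via conjugation by $\widehat{\PhiB_X}^{1/2}$ to the symmetric matrix $\widehat{\PhiB_X}^{-1/2}\widehat{\K}_X\widehat{\PhiB_X}^{-1/2}$, which an orthogonal $\UU_0$ diagonalizes: $\UU_0^T\widehat{\PhiB_X}^{-1/2}\widehat{\K}_X\widehat{\PhiB_X}^{-1/2}\UU_0 = \TTheta$. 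Setting $\UU := \widehat{\PhiB_X}^{1/2}\UU_0$ then yields $\UU^{-1}\widehat{\K}_X\widehat{\PhiB_X}^{-1}\UU = \TTheta$.

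Now define $\VV := \bigl[\PP,\ \WW\UU\bigr] \in \R^{N\times N}$. Using $\WW^T\PP = \zero$, $\PP^{\dagger}\PP=\Id_M$, and $\WW^T\WW=\Id_{N-M}$, one finds $\VV^{-1} = \begin{pmatrix} \PP^{\dagger} \\ \UU^{-1}\WW^T \end{pmatrix}$; conjugating each summand of the identity above by $\VV$ then produces, respectively, $\LLambda$ in the $(1,1)$ block, $\TTheta$ in the $(2,2)$ block, and $\RR := (\PP^{\dagger}\K_X - \LLambda\PP^{\dagger}\PhiB_X)\WW\widehat{\PhiB_X}^{-1}\UU$ in the $(1,2)$ block, with all other entries vanishing. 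This is the claimed factorization.

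For the sign statements, the diagonal entries of $\TTheta$ coincide with the eigenvalues of $\widehat{\PhiB_X}^{-1/2}\widehat{\K}_X\widehat{\PhiB_X}^{-1/2}$, which shares the signature of $\widehat{\K}_X$ by Sylvester's law of inertia; the discussion preceding the lemma, via \cite{menegatto2004conditionally}, already records precisely the conditions on $\{\lambda_\ell c_\ell\}_{\ell\ge\m}$ under which $\widehat{\K}_X$ is positive semidefinite or strictly positive definite. The main obstacle is organizational rather than conceptual: because diagonalizing the non-symmetric $\widehat{\K}_X\widehat{\PhiB_X}^{-1}$ requires the non-orthogonal factor $\widehat{\PhiB_X}^{1/2}$ in $\UU$, one cannot simply use an orthonormal basis of $\R^N$ for $\VV$, and the specific concatenation $\VV = \bigl[\PP,\ \WW\UU\bigr]$ is what simultaneously keeps the $(1,1)$ block equal to $\LLambda$ and the $(2,1)$ block zero.
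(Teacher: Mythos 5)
Your proof is correct and follows essentially the same route as the paper's: your $\WW\UU$ is exactly the paper's $\ZZ = \WW\SS^{-1}\UU$ once one notes $\widehat{\AA} = \widehat{\PhiB_X}^{-1}$, so $\SS^{-1} = \widehat{\PhiB_X}^{1/2}$, and your $\RR$ coincides with theirs. The only surface difference is that you invoke Sylvester's law of inertia for the sign of $\TTheta$ where the paper argues directly that $\UU^T\SS\widehat{\K}_X\SS^T\UU$ is positive (semi-)definite; these are the same observation.
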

\begin{proof}
The second term in (\ref{first_decomposition}) can be written as 
\begin{equation}
\label{cob}
\QQ \K_X \AA \QQ 
= \WW \widehat{\K_X \AA} \WW^T 
= \WW \widehat{\K}_X \widehat{\AA} \WW^T,
\end{equation}
since $ \K_X \AA  =\K_X \QQ \AA$. 
Denote by
$$\SS:=(\widehat{\AA})^{1/2}\in \R^{(N-M)\times (N-M)}$$
the symmetric positive definite square root of $\widehat{\AA}$, 
i.e., $\widehat{\AA} = \SS^T \SS$.  
By symmetry of 
$ \SS \widehat{\K}_X \SS^T $,
it follows that 
$ \SS \widehat{\K}_X \SS^T =  \UU \TTheta \UU^T $
for an orthogonal matrix $\UU\in \R^{(N-M)\times(N-M)}$ 
and a diagonal matrix $\TTheta = \diag(\theta_1,\ldots,\theta_{N-M})$.

Hence, we have the factorization
\begin{equation}
\label{KA_diag}
 \widehat{\K}_X \widehat{\AA} 
= \SS^{-1} \Bigl(\SS \widehat{\K} \SS^T \Bigr) \SS
= \SS^{-1} \UU \TTheta \UU^T \SS.
\end{equation}
Define
\begin{equation}
\label{Z_def}
\ZZ := \WW\SS^{-1} \UU\in \R^{N\times(N-M)}.
\end{equation}
Note that $ \UU^T \SS \WW^T$
is a left inverse of $\ZZ$.
Writing  $\ZZ^{\dagger} := \UU^T \SS \WW^T$,
we see that 
$ \ZZ\ZZ^{\dagger}=\ZZ  (\UU^T \SS \WW^T)=\WW\WW^T= \QQ$.
Combining (\ref{cob}) and (\ref{KA_diag}), we observe 
that 
$\QQ\K_X \AA \QQ
=
  \WW\SS^{-1} \UU\TTheta\UU^T\SS\WW^T
=
\ZZ\TTheta \ZZ^{\dagger}  
$, which yields 
\begin{equation}
\label{second_decomposition}
\MM_X = 
 \PP\LLambda \PP^{\dagger} 
+
\ZZ\TTheta \ZZ^{\dagger}  
+
\PP
\Bigl( 
    \PP^{\dagger} \K_X-\LLambda  \PP^{\dagger} \PhiB_X
\Bigr)
\AA.
\end{equation}
Using $\AA= \AA \QQ = \AA \ZZ\ZZ^{\dagger}$, 
we have
\begin{equation}\label{third_block_diag}
\PP
\Bigl( 
\PP^{\dagger} \K_X-\LLambda \PP^{\dagger}\PhiB_X
\Bigr)
\AA 
= 
\PP \RR \ZZ^{\dagger}
\end{equation}
with 
\begin{equation}\label{R_def}
\RR:= 
\Bigl(  
    \PP^{\dagger} \K_X-\LLambda \PP^{\dagger}\PhiB_X
\Bigr)
\AA \ZZ 
\in \R^{M\times (N-M)}.
\end{equation}
From (\ref{second_decomposition}),  and (\ref{third_block_diag}),
 we have the factorization 
\begin{equation}
\label{Block_decomposition}
\MM_X 
= 
\begin{pmatrix} \PP& \ZZ\end{pmatrix}
\begin{pmatrix} \LLambda &\RR \\ \zero & \TTheta \end{pmatrix}
\begin{pmatrix} 
    \PP^{\dagger}\\ \ZZ^{\dagger} 
\end{pmatrix}
\end{equation}
Finally,
the square matrix matrix $\VV:= \begin{pmatrix} \PP& \ZZ\end{pmatrix}$ satisfies
$$\begin{pmatrix} \PP& \ZZ\end{pmatrix} \begin{pmatrix} \PP^{\dagger}\\ \ZZ^{\dagger}\end{pmatrix} =\PP\PP^{\dagger} +\QQ=\Id_N$$
so $\VV^{-1} = \begin{pmatrix} \PP^{\dagger}& \ZZ^{\dagger}\end{pmatrix}^T$.
By similarity, the spectrum of $\MM_X$ is 
$$\sigma(\MM_X) = \{\lambda_{\ell}\mid \ell<\m\}
\cup 
\{\theta_j\mid j\le N-M\}.
$$ 
Finally, we note that if $\lambda_{\ell}\ge 0$ for all $\ell\ge \m$, then 
$\widehat{\K}_X $ is positive semi-definite, and
so is
$\SS \widehat{\K}_X \SS$.
This implies that 
$\TTheta  = \UU^T\SS \widehat{\K}_X \SS^T  \UU $,
is positive semi-definite.
Since $\TTheta$  is diagonal,
each $\theta_j\ge0$. 
The last statement follows from the observation that $\widehat{\K}_X$ is positive definite in this case.
\end{proof}

\begin{remark}
    As a consequence of Lemma \ref{block_diag_lemma} we obtain that 
    $\sigma(\MM_X) \subset (0,\infty)$ for an operator $\opL=p(\Delta)$ with $p:\sigma(\Delta) \to (0,\infty)$ also for a CPD kernel of order $\tilde{m}$.
\end{remark}
More quantitative results will be shown in Lemma \ref{separation}.
\subsection{A generalization of the Bauer-Fike Theorem}
\label{SS:gBF}
By adapting the argument from \cite{Chu},
we obtain the following estimate on perturbation of eigenvalues of $\MM_X$.
\begin{proposition}
\label{Generalized_BF}
Under conditions of Lemma \ref{block_diag_lemma},
if $\MM^{\epsilon}
\in \R^{N\times N}$
and
if 
$\mu\in \sigma(\MM^{\epsilon})$,
then
$$
\dist(\mu,\sigma(\MM_X))
\le 
\max\left(
2 \kappa \|\MM_X -\MM^{\epsilon}\|,
\sqrt{2\kappa \|\RR\|  
\|\MM_X - \MM^{\epsilon}\|}
\right)
$$
holds with 
$\kappa:= 
\mathrm{cond}(\VV) = 
\|\VV\| \|\VV^{-1}\|
$. 
We note that $\RR$ and $\VV$ are matrices appearing in the
decomposition of $\MM_X$ given in 
Lemma \ref{block_diag_lemma}.
\end{proposition}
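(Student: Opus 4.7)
The plan is to follow the classical Bauer-Fike argument, but replace the diagonalization of $\MM_X$ with the block-upper-triangular factorization from Lemma \ref{block_diag_lemma} and carry the extra off-diagonal block $\RR$ through the resolvent estimate.

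First I would dispense with the trivial case $\mu \in \sigma(\MM_X)$ and assume $\mu \notin \sigma(\MM_X)$. Let $v \neq 0$ with $\MM^{\epsilon} v = \mu v$, and set $\BB = \begin{pmatrix} \LLambda & \RR \\ \zero & \TTheta \end{pmatrix}$ so that $\MM_X = \VV \BB \VV^{-1}$. Rewriting $\MM^{\epsilon} v = \mu v$ as $(\MM_X - \mu\Id)v = -(\MM^{\epsilon} - \MM_X)v$ and substituting the factorization yields, for $w := \VV^{-1} v \neq 0$, the identity $w = -(\BB - \mu\Id)^{-1} \VV^{-1}(\MM^{\epsilon} - \MM_X) \VV w$. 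Taking $\ell_2$ norms and dividing by $\|w\|$ gives the familiar Bauer-Fike-style inequality
\begin{equation*}
1 \le \kappa \, \|\MM^{\epsilon} - \MM_X\| \, \|(\BB - \mu\Id)^{-1}\|.
\end{equation*}

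The heart of the argument is bounding $\|(\BB - \mu\Id)^{-1}\|$. Since $\LLambda - \mu\Id$ and $\TTheta - \mu\Id$ are invertible diagonal matrices, the inverse has the closed form
\begin{equation*}
(\BB - \mu\Id)^{-1} = \begin{pmatrix} (\LLambda - \mu\Id)^{-1} & -(\LLambda - \mu\Id)^{-1}\RR(\TTheta - \mu\Id)^{-1} \\ \zero & (\TTheta - \mu\Id)^{-1} \end{pmatrix}.
\end{equation*}
Because $\LLambda$ and $\TTheta$ are diagonal, $\|(\LLambda - \mu\Id)^{-1}\|$ and $\|(\TTheta - \mu\Id)^{-1}\|$ are the reciprocals of $\dist(\mu,\sigma(\LLambda))$ and $\dist(\mu,\sigma(\TTheta))$, both of which are bounded below by $\delta := \dist(\mu,\sigma(\MM_X))$, since $\sigma(\MM_X) = \sigma(\LLambda) \cup \sigma(\TTheta)$ by Lemma \ref{block_diag_lemma}. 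Splitting the block inverse into its block-diagonal part plus its strictly upper-triangular part and applying the triangle inequality for the operator norm gives
\begin{equation*}
\|(\BB - \mu\Id)^{-1}\| \;\le\; \frac{1}{\delta} + \frac{\|\RR\|}{\delta^2}.
\end{equation*}

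Combining the two displayed inequalities yields $\delta^2 \le \kappa\,\|\MM^{\epsilon} - \MM_X\|\,(\delta + \|\RR\|)$. Using $a+b \le 2\max(a,b)$ on the right-hand side, I would split into cases: if $\delta \ge \|\RR\|$ the inequality forces $\delta \le 2\kappa\,\|\MM^{\epsilon}-\MM_X\|$, while otherwise $\delta^2 \le 2\kappa\,\|\RR\|\,\|\MM^{\epsilon}-\MM_X\|$. Taking the maximum of the two bounds delivers the claim. The only genuine obstacle is the block-resolvent estimate; once that is in hand the remainder is bookkeeping. The square-root term is the intrinsic cost of the failure of diagonalizability (encoded in the block $\RR$) and collapses to zero when $\RR = 0$, recovering the classical Bauer-Fike form.
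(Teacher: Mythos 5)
Your proposal is correct and follows essentially the same route as the paper's own proof: the key step is the block-resolvent bound $\|(\BB - \mu\Id)^{-1}\| \le \delta^{-1} + \|\RR\|\,\delta^{-2}$ obtained by splitting the explicit inverse into its block-diagonal and strictly upper-triangular parts, followed by the same two-case split on whether $\delta$ or $\|\RR\|$ dominates. The only cosmetic difference is that you carry an actual eigenvector $v$ of $\MM^{\epsilon}$ through the argument, whereas the paper works with the matrix factorization $\VV^{-1}(\mu\Id - \MM^{\epsilon})\VV = (\mu\Id - \BB)(\Id + \tilde{\EE})$ and the observation that singularity forces $\|\tilde{\EE}\|\ge 1$; both yield the same inequality $1 \le \kappa\,\|\MM_X - \MM^{\epsilon}\|\,\|(\BB - \mu\Id)^{-1}\|$.
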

\begin{proof}
Assume without loss that
$\mu\in \sigma(\MM^{\epsilon})\setminus \sigma(\MM_X)$.
For the invertible matrix $\VV$  given in (\ref{Block_decomposition}),
and for $\EE :=\MM_X - \MM^{\epsilon}$, 
we have the factorization 
\begin{eqnarray*}
\VV^{-1}(\mu \Id - \MM^{\epsilon} )\VV 
&=& 
 \mu\Id 
 - 
 \begin{pmatrix} \LLambda&\RR\\0&\TTheta\end{pmatrix} 
 + 
 \VV^{-1}\EE \VV\\
&=& 
\left( 
    \mu\Id 
    - 
    \begin{pmatrix} \LLambda&\RR\\0&\TTheta\end{pmatrix}
\right)
\Bigl(\Id + \tilde{\EE}\Bigr),
\end{eqnarray*}
where
$
\tilde{\EE}
:= 
\left( 
    \mu\Id - \begin{pmatrix} \LLambda&\RR\\0&\TTheta\end{pmatrix}
\right)^{-1}
\VV^{-1} \EE \VV
$.
Since
$\VV^{-1}(\mu\Id - \MM^{\epsilon})\VV$  is singular and 
$\mu\notin \sigma(\MM_X)$,
it follows from the above factorization that 
$\Id + \tilde{\EE}$ is singular, 
and therefore we have that 
$\| \tilde{\EE}\|\ge 1$.
This ensures that 
$$
\left\|
    \left( 
        \mu\Id - 
        \begin{pmatrix}\LLambda&\RR\\0&\TTheta\end{pmatrix}
    \right)^{-1} 
\right\|^{-1} 
\le 
\|\VV^{-1} \EE \VV\|
\le  
\kappa \|\MM_X - \MM^{\epsilon}\|.
$$
In other words,
\begin{eqnarray}
\left\|  
    \begin{pmatrix} 
        \mu\Id_{M}-\LLambda&-\RR\\
        0&\mu\Id_{N-M}-\TTheta
    \end{pmatrix}^{-1}
\right\|^{-1}
&\le& 
\kappa(\VV) \|\MM_X - \MM^{\epsilon}\|.
\label{lower_bound}
\end{eqnarray}
At this point, we may  control the left hand side 
of (\ref{lower_bound}) from below by  estimating 
\begin{multline*}
F(\mu):=
\left\|  
    \begin{pmatrix} 
     \mu\Id_{M}-\LLambda&-\RR\\
     0&\mu\Id_{N-M}-\TTheta
    \end{pmatrix}^{-1}
\right\|
\\
=
\left\|  
  \begin{pmatrix} 
  (\mu\Id_{M}-\LLambda)^{-1}&
  (\mu\Id_{M}-\LLambda)^{-1}\RR(\mu\Id_{N-M}-\TTheta)^{-1}\\
  0&
  (\mu\Id_{N-M}-\TTheta)^{-1}
  \end{pmatrix}
\right\| .
\end{multline*}
By a  triangle inequality:
\begin{eqnarray*}
F(\mu)
&\le&
\left\|  
  \begin{pmatrix} 
  (\mu\Id_{M}-\LLambda)^{-1}&0\\
  0&(\mu\Id_{N-M}-\TTheta)^{-1}
  \end{pmatrix}
\right\| \\
&& 
+
\left\|  
  (\mu\Id_{M}-\LLambda)^{-1}\RR(\mu\Id_{N-M}-\TTheta)^{-1}
\right\| 
\end{eqnarray*}

Both
 $|(\mu-\TTheta)_{j,j}^{-1}|$  and 
 $|(\mu-\LLambda)_{k,k}^{-1}|$ 
 can be controlled by  $1/\dist(\mu,\sigma(\MM_X)) $
 for any $j\le N-M$ and any $k\le M$.
 Thus,
 $$F(\mu)
 \le 
 \bigl(\dist(\mu,\sigma(\MM_X))\bigr)^{-1}  
 +
 \bigl( \dist(\mu,\sigma(\MM_X)) \bigr)^{-2} 
 \|\RR\| 
 .
 $$

 \paragraph{Case 1}
 If 
 $ \bigl(\dist(\mu,\sigma(\MM_X))\bigr)^{-1}  
 \le
 \bigl( \dist(\mu,\sigma(\MM_X)) \bigr)^{-2}  \|\RR\| $,
 then the upper bound on $F(\mu)$ becomes
 $F(\mu)
\le 2  \bigl( \dist(\mu,\sigma(\MM_X)) \bigr)^{-2} 
 \|\RR\|$. 
 So  (\ref{lower_bound}) guarantees that 
 \begin{equation}\label{case1}
 \frac{\bigl( \dist(\mu,\sigma(\MM_X)) \bigr)^{2} }{2\|\RR\|}
 \le  
 \kappa  \|\MM_X - \MM^{\epsilon}\|.
 \end{equation}

 \paragraph{Case 2}
 If 
 $ \bigl(\dist(\mu,\sigma(\MM_X))\bigr)^{-1}  
 >
 \bigl( \dist(\mu,\sigma(\MM_X)) \bigr)^{-2}  \|\RR\| $,
 then the upper bound on $F(\mu)$ implies that
 $F(\mu)
\le 2  \bigl( \dist(\mu,\sigma(\MM_X)) \bigr)^{-1} 
$. 
 So  from (\ref{lower_bound})
 we have
 \begin{equation}
 \label{case2}
 \frac{\bigl( \dist(\mu,\sigma(\MM_X)) \bigr) }{2}
 \le  
 \kappa \|\MM_X - \MM^{\epsilon}\|.
 \end{equation}
 \smallskip
 The result follows from (\ref{case1}) and (\ref{case2}).
 \end{proof}

%
%
%
\subsection{Diagonalizing the DM}
\label{SS:diagDM}
From Lemma \ref{block_diag_lemma}
 it follows that $\MM_X$ is diagonalizable if and only if 
$\begin{pmatrix} \LLambda &\RR \\ \zero & \TTheta \end{pmatrix}$ is diagonalizable.
We can recast this by way of the Sylvester problem: 
find $\XX\in\R^{M\times (N-M)}$ so that 
\begin{equation}
\label{Sylv}
-\LLambda \XX + \XX \TTheta = \RR.
\end{equation}
If (\ref{Sylv}) has a solution, then
\begin{equation}
 \label{diagonalizing}
\begin{pmatrix} \LLambda &\RR \\ \zero & \TTheta \end{pmatrix} = 
\begin{pmatrix} \Id_M &\XX \\ \zero & \Id_{N-M} \end{pmatrix}
\begin{pmatrix} \LLambda &\zero \\ \zero & \TTheta \end{pmatrix}
\begin{pmatrix} \Id_M &-\XX \\ \zero & \Id_{N-M} \end{pmatrix},
\end{equation}
so the block upper triangular matrix is diagonalizable.
There are practical solution methods for problems of the form (\ref{Sylv}) 
under significantly more general conditions than we use 
(just the assumption that the spectra of 
$\sigma(\LLambda)$ and $\sigma(\TTheta)$ 
are separated, see \cite{bickley1960matrix,bartels1972solution} 
and \cite{Higham} for an overview). 
In our case, where $\LLambda$ and $\TTheta$ are diagonal,  the 
solution is very simple.
We may rewrite (\ref{Sylv}) with the help of the matrix
$\GGamma = \Bigl(  \theta_j-\lambda_i \Bigr)_{i,j}$. 
In that case, (\ref{Sylv}) has the form
$\GGamma \odot \XX = \RR$, where $\odot$ is entry-wise multiplication.
This leads to three cases:
\begin{enumerate}
\item 
The spectra of $\LLambda$ and $\TTheta$ are disjoint, 
in which case $\GGamma$ has no zero entries, and $\XX$
has a unique solution obtained by entry-wise division.
\item 
There is some overlap between spectra of $\LLambda$ and $\TTheta$, 
but each zero entry of $\GGamma$ corresponds
to a zero entries of $\RR$. 
In this case, there are many solutions to (\ref{Sylv}). 
\item $\TTheta$ and $\LLambda $, 
have overlapping spectra and 
$\RR_{i,j}\neq 0$ for some $i,j$ for which $\GGamma_{j,j} = 0$.
In  this case, $\LLambda_{i,i} =\TTheta_{j,j}$ 
has a generalized eigenvector.
\end{enumerate}
If either case 1.\ or 2.\ holds, then (\ref{Sylv}) has a solution

Under basic hypotheses on the operator $\opL$, we can estimate the separation between $\sigma(\LLambda)$ and $\sigma(\TTheta)$, and therefore
we can estimate the norm of $\XX$.
%
%
%
\begin{lemma}
\label{separation}
Suppose Assumption \ref{PD_assumption} holds.
Define numbers $\lambda_{\flat},\lambda^{\sharp}\in \R$ 
  as
 $$
\lambda_{\flat} := \max\{ p(\nu_{\ell}) \mid \ell< \m\}
\quad
\text{ and }
\quad
\lambda^{\sharp} := \min\{p(\nu_{\ell})\mid \ell\ge \m\}.$$
If $\lambda_{\flat}<\lambda^{\sharp}$, 
then $\min_{j\le N-M}\TTheta_{j,j}-\max_{i\le M}\LLambda_{i,i} \ge \lambda^{\sharp}- \lambda_{\flat}>0$.
\end{lemma}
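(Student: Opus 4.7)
The plan is to handle the two diagonal factors $\LLambda$ and $\TTheta$ separately. The $\LLambda$ half is immediate from the construction in Lemma~\ref{block_diag_lemma}: the diagonal entries of $\LLambda$ are exactly $\lambda_\ell = p(\nu_\ell)$ for indices $\ell < \m$, so $\max_{i\le M}\LLambda_{i,i} \le \lambda_\flat$ by definition of $\lambda_\flat$. The whole content of the lemma is therefore the lower bound $\min_j \TTheta_{j,j} \ge \lambda^\sharp$.

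For the $\TTheta$ half, I would exploit the fact that $\TTheta$ is similar to $\widehat{\K}_X \widehat{\AA}$, and hence has the same spectrum as the symmetric matrix $\SS\widehat{\K}_X\SS^T$ with $\SS = \widehat{\AA}^{1/2}$, as established in the proof of Lemma~\ref{block_diag_lemma}. It therefore suffices to show $\SS\widehat{\K}_X\SS^T \succeq \lambda^\sharp \Id_{N-M}$. The key observation is that since $\widehat{\AA} = (\widehat{\PhiB}_X)^{-1}$, we have
\begin{equation*}
\SS \,\widehat{\PhiB}_X \,\SS^T \;=\; \widehat{\PhiB}_X^{-1/2}\,\widehat{\PhiB}_X\,\widehat{\PhiB}_X^{-1/2} \;=\; \Id_{N-M},
\end{equation*}
so after conjugation by $\SS$ the bound $\SS\widehat{\K}_X\SS^T \succeq \lambda^\sharp \Id$ is equivalent to $\widehat{\K}_X - \lambda^\sharp \widehat{\PhiB}_X \succeq 0$, i.e.\ $\WW^T(\K_X - \lambda^\sharp \PhiB_X)\WW \succeq 0$.

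The heart of the proof is therefore to verify that $\K_X - \lambda^\sharp \PhiB_X$ is positive semi-definite on $\Pi_{\m-1}(X)^\perp$. Consider the zonal kernel $\Psi - \lambda^\sharp \Phi$, whose Mercer-like expansion has coefficients $(\lambda_\ell - \lambda^\sharp)c_\ell$. For $\ell\ge \m$, Assumption~\ref{PD_assumption} gives $c_\ell \ge 0$, and by definition of $\lambda^\sharp$ we have $\lambda_\ell - \lambda^\sharp \ge 0$, so all of these coefficients are non-negative. The coefficients for $\ell < \m$ are unconstrained in sign, but since the corresponding rank-one terms $Y_\ell^\mu(x)Y_\ell^\mu(y)$ give collocation matrices lying in the range of $\PP$, they vanish after compression by $\WW$. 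Concretely, each vector $\alpha \in \Pi_{\m-1}(X)^\perp$ annihilates the low-order polynomial evaluations, so
\begin{equation*}
\alpha^T\bigl(\K_X - \lambda^\sharp \PhiB_X\bigr)\alpha \;=\; \sum_{\ell\ge \m}(\lambda_\ell - \lambda^\sharp) c_\ell \sum_{\mu\le N_\ell}\Bigl(\sum_{j}\alpha_j Y_\ell^\mu(x_j)\Bigr)^{\!2} \;\ge\; 0,
\end{equation*}
using absolute/uniform convergence (guaranteed by the compatibility hypothesis) to interchange sums.

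The main obstacle I anticipate is not a calculation but a citation: I need to be careful that the condition for a truncated/modified Mercer expansion to yield a kernel that is conditionally positive semi-definite (rather than strictly CPD) on $\Pi_{\m-1}(X)^\perp$ is simply $c_\ell\ge 0$ for $\ell \ge \m$. This is a mild weakening of \cite[Theorem 4.6]{menegatto2004conditionally}, which produces strict positive definiteness under the additional odd/even non-vanishing hypothesis; the semi-definite version follows directly from the display above, so no new theorem is needed. Once semi-definiteness is in hand, the conjugation by $\SS$ and the identity $\SS\widehat{\PhiB}_X\SS^T = \Id$ close the argument, yielding $\TTheta_{j,j} \ge \lambda^\sharp$ for every $j$, and combining with the trivial upper bound on $\LLambda$ finishes the proof.
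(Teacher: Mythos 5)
Your proof is correct and follows essentially the same route as the paper: both shift the operator by $\lambda^{\sharp}$ and reduce the claim to positive semi-definiteness of the compressed collocation matrix $\WW^T(\K_X-\lambda^{\sharp}\PhiB_X)\WW$ via the Mercer coefficients $(\lambda_\ell-\lambda^{\sharp})c_\ell\ge 0$ for $\ell\ge\m$. The paper packages this by applying Lemma~\ref{block_diag_lemma} to the shifted operator $\tilde{\opL}=\opL-\lambda^{\sharp}\mathrm{Id}$, whereas you unwind the conjugation by $\SS=\widehat{\AA}^{1/2}$ explicitly and use $\SS\widehat{\PhiB}_X\SS^T=\Id$; this is slightly more transparent and also makes explicit the point you anticipated, that only conditional positive \emph{semi-}definiteness of $\Psi-\lambda^{\sharp}\Phi$ is needed (the paper's passing remark that $\tilde{\Psi}$ is CPD of order $\m$ is a mild overstatement, since $\lambda_\ell-\lambda^{\sharp}$ may vanish for many $\ell\ge\m$).
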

%
%
%
\begin{proof}
Set
 $\tilde{\opL}
 := \opL - \lambda^{\sharp}
 \mathrm{Id}$.
 Then 
 $\tilde{\Psi}= \tilde{\opL}\Phi$ 
 is a zonal kernel and, 
 moreover,
 is CPD of order $\m$.
 By definition,
$\Psi-\tilde{\Psi} = \lambda^{\sharp} \Phi$.

For $X\subset \Sph^d$, 
let $\K_X := (\Psi(x_j,x_k))_{j,k}$ and
$\tilde{\K}_X := (\tilde{\Psi}(x_j,x_k))_{j,k}$.
Both 
matrices are symmetric,
and we 
 have that $\tilde{\K}_X = \K_X -  \lambda^{\sharp}\PhiB_X$.
Define $\tilde{\LLambda} :=\LLambda - \lambda^{\sharp} \Id_M$, 
and note that $\tilde{\LLambda}$
satisfies the property that 
$\PP \tilde{\LLambda } = \PP \LLambda - \lambda^{\sharp}\PP$.
Thus 
the DM for
$\tilde{\opL}$ is
$$\tilde{\MM}_X %
= \tilde{\K}_X \AA + \PP \tilde{\LLambda } \BB
= \MM_X - \lambda^{\sharp} \Id_N.$$ 
From Lemma \ref{block_diag_lemma}, it follows  that $\tilde{\MM}_X$ has spectrum 
 $$
 \sigma(\tilde{\MM}_X) = \{p(\nu_{\ell} ) - \lambda^{\sharp}\mid \ell<\m\}\cup \{\tilde{\theta}_j\mid j\le N-M\},$$ 
 and that the first component satisfies the inclusion
  $$\{\lambda_{\ell}  - \lambda^{\sharp}\mid \ell<\m\}\subset (-\infty, \lambda_{\flat} - \lambda^{\sharp}]\subset (-\infty,0)$$
  while $\{\tilde{\theta}_j\mid j\le N-M\}\subset [0,\infty)$.
  Thus, for $\MM_X$, the DM for $\opL$, 
  we have the disjoint union
  $$\sigma(\MM_X) = \sigma(\tilde{\MM}_X) +\lambda^{\sharp}= \{\lambda_{\ell}   \mid \ell<\m\}  \sqcup  
  \{\theta_j\mid j\le N-M\}.$$
  \end{proof}
  
In  case the hypotheses of Lemma \ref{separation} hold, 
we can diagonalize $\MM_X$ and make use of the 
standard Bauer-Fike theorem instead of Proposition \ref{Generalized_BF}.

\begin{theorem}
\label{Generalized_BF_Diagonalized}
Suppose Assumption \ref{PD_assumption} holds with $\m>0$.
If the {\em separation} 
$$ \gamma:=\min_{\ell\ge \m}\lambda_{\ell}
 -
 \max_{\ell<\m}\lambda_{\ell}
 $$ is positive, 
then for any $N$-set $X\subset \Sph^d$,
$\MM_X$ is diagonalizable, and for 
matrix $\MM^{\epsilon}\in\R^{N\times N}$ and 
eigenvalue
$\mu\in \sigma(\MM^{\epsilon})$ we  have
$$\dist(\mu,\sigma(\MM_X))\le  
\Bigl(1+\frac{\|\RR\|}{\gamma}\Bigr)^2
\|\VV\| 
\|\VV^{-1}\| 
\|\MM_X-\MM^{\epsilon}\|.$$
Here $\RR$ and $\VV$ are matrices appearing in the
decomposition of $\MM_X$ given in 
Lemma \ref{block_diag_lemma}.
\end{theorem}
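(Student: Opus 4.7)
The plan is to upgrade the block upper-triangular factorization of Lemma \ref{block_diag_lemma} to a genuine diagonalization via the Sylvester construction \eqref{diagonalizing}, and then apply the standard Bauer-Fike theorem. Since $\gamma>0$, Lemma \ref{separation} gives $\sigma(\LLambda)\cap\sigma(\TTheta)=\emptyset$, so Case 1 of the discussion following \eqref{Sylv} applies: the Sylvester equation $-\LLambda\XX + \XX\TTheta = \RR$ has a unique solution $\XX$, obtained entrywise by $\XX_{i,j}=\RR_{i,j}/(\TTheta_{j,j}-\LLambda_{i,i})$, and \eqref{diagonalizing} then yields the diagonalization
\[
\MM_X = \VV\YY\,\begin{pmatrix}\LLambda & \zero\\ \zero & \TTheta\end{pmatrix}\,\YY^{-1}\VV^{-1},\qquad \YY:=\begin{pmatrix}\Id_M & \XX\\ \zero & \Id_{N-M}\end{pmatrix}.
\]
In particular $\MM_X$ is diagonalizable.

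The key step is to bound $\|\XX\|$ in the $\ell_2$ operator norm by $\|\RR\|/\gamma$. I would use the integral representation
\[
\XX = \int_0^{\infty} e^{t\LLambda}\,\RR\, e^{-t\TTheta}\,\dif t.
\]
Because $\LLambda$ and $\TTheta$ are real diagonal (hence Hermitian) with $\max_i\LLambda_{i,i}\le \lambda_\flat < \lambda^\sharp \le \min_j\TTheta_{j,j}$, the semigroup norms satisfy $\|e^{t\LLambda}\|\le e^{t\lambda_\flat}$ and $\|e^{-t\TTheta}\|\le e^{-t\lambda^\sharp}$, so the integrand decays like $e^{-t\gamma}$ and the integral converges absolutely. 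Differentiating under the integral sign and using $\frac{d}{dt}\bigl(e^{t\LLambda}\RR e^{-t\TTheta}\bigr) = \LLambda (e^{t\LLambda}\RR e^{-t\TTheta}) - (e^{t\LLambda}\RR e^{-t\TTheta})\TTheta$ verifies that the right-hand side solves $-\LLambda\XX + \XX\TTheta = \RR$, and the triangle inequality then gives $\|\XX\|\le \|\RR\|\int_0^{\infty}e^{-t\gamma}\dif t = \|\RR\|/\gamma$.

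To conclude, I would apply Bauer-Fike to the diagonalization above with similarity matrix $\VV\YY$: for any eigenvalue $\mu$ of $\MM^\epsilon$,
\[
\dist(\mu,\sigma(\MM_X)) \le \mathrm{cond}(\VV\YY)\,\|\MM_X-\MM^\epsilon\| \le \mathrm{cond}(\VV)\,\mathrm{cond}(\YY)\,\|\MM_X-\MM^\epsilon\|.
\]
The matrix $\YY-\Id_N$ has a single nonzero $(1,2)$ block equal to $\XX$, so its operator norm is exactly $\|\XX\|$; nilpotency gives $\YY^{-1}=\Id_N-(\YY-\Id_N)$, so both $\|\YY\|$ and $\|\YY^{-1}\|$ are bounded by $1+\|\XX\|$, and hence $\mathrm{cond}(\YY)\le (1+\|\XX\|)^2 \le (1+\|\RR\|/\gamma)^2$. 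Assembling these bounds produces the stated estimate.

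The main obstacle is the operator-norm control of $\XX$: working purely entrywise one gets only the Frobenius-norm bound $\|\XX\|_F\le \|\RR\|_F/\gamma$, which is dimension-dependent and too weak for the intended application. The integral representation above is the crucial device—it exploits the Hermitian symmetry of $\LLambda$ and $\TTheta$ to replace the Frobenius-norm estimate by the sharp $\ell_2$ one that depends on the separation $\gamma$ alone, not on $N$ or $M$.
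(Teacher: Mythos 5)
Your proposal is correct and follows the same overall strategy as the paper's proof: diagonalize $\MM_X$ via the Sylvester solution $\XX$ as in \eqref{diagonalizing}, bound $\mathrm{cond}(\YY)\le(1+\|\XX\|)^2$, and apply the standard Bauer--Fike theorem. The paper simply asserts $\|\XX\|\le\|\RR\|/\gamma$ without justification, whereas you correctly identify the subtlety (the naive entrywise argument only yields a Frobenius-norm bound) and supply a proof via the integral representation $\XX=\int_0^\infty e^{t\LLambda}\RR e^{-t\TTheta}\,\dif t$, using the Hermitian separation $\min_j\TTheta_{j,j}-\max_i\LLambda_{i,i}\ge\gamma$ from Lemma \ref{separation}; this is a welcome fill-in of a gap rather than a different route.
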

\begin{proof}
From Lemma \ref{block_diag_lemma}
and (\ref{diagonalizing}),
$\MM_X$ has the  factorization:
 \begin{equation*}
\MM_X = \tilde{\VV} \begin{pmatrix} \LLambda &\zero \\ \zero & \TTheta \end{pmatrix}
\tilde{\VV}^{-1}
\qquad\text{where}\quad
\tilde{\VV}:=\VV\begin{pmatrix} \Id_M &\XX \\ \zero & \Id_{N-M} \end{pmatrix}
 \end{equation*}
 and where $\GGamma \odot \XX = \RR$.
The condition number for the eigenbasis is controlled by
  $\kappa(\tilde{\VV}) \le (1+\|\XX\|)^2\|\VV\|\|\VV^{-1}\|$.
 We can estimate $\|\XX\| \le \frac{1}{\gamma}\|\RR\|$.
 The result then follows from the standard Bauer-Fike theorem.
  \end{proof}
%
%
%


%
%
%
\subsection{Matrix norms of elements of the block decomposition}
\label{SS_norms_of_block_decomp}
We now restrict the situation to 
kernels  which are CPD of order $\m$ 
and for which 
the coefficients in the expansion (\ref{HS}) satisfy
$c_\ell \sim |\nu_{\ell}|^{-m}$ for all $\ell\ge\m$.

\paragraph{Estimating $\|\PP\|$ 
and $\|\PP^{\dagger}\|$}
\label{SSS:Vandermonde}
We can estimate $\|\PP\|$
via H{\"o}lder's inequality. 
$$
\sum_{k\le M}|\sum_{j\le N} a_j p_j(x_k)|^2
\le 
\sum_{k\le M} \|a\|_2^2 \sum_{j\le N} |p_j(x_k)|^2
\le 
N M \max_{j\leq M} \|p_j\|_{\infty}^2 \|a\|_2^2,
$$
so 
$\|\PP\|
\le 
\sqrt{NM}\max_{j\le M}\|p_j\|_{\infty}$. 
Since $M$ is assumed to be fixed and $N\le C q^{-d}$, 
we can express this estimate as
\begin{equation}
    \label{general_P_norm}
\|\PP\|\le C q^{-d/2}.
\end{equation}
Naturally, the 
bound (\ref{general_P_norm})
holds for $\|\PP^T\|$ as well.

A simple consequence of
\cite[Theorem 4.2]{NPW-L} guarantees that 
the Gram matrix
$\PP^T \PP$ has spectrum contained in the interval
$[C_1 h^{-d}, C_2q^{-d}]$ 
for constants $C_1$ and $C_2$ independent of $X$
(see \cite[Lemma 4.2]{EHNRW} 
for this argument in the case $d=2$). 
It follows that 
$\|(\PP^T \PP)^{-1}\| \le C h^{d}\le C \rho^d q^d$, 
where $\rho= h/q$ is the {\em mesh ratio} of $X$,
and $C$ is a constant that only depends on $\m$ and $d$. 
Thus, 
\begin{equation}\label{dagger}
\|\PP^{\dagger}\|
\le 
\|(\PP^T \PP)^{-1}\| \|\PP^T \| \le C \rho^d q^{d/2}.
\end{equation}

\paragraph{Estimating   $\| \ZZ\|$ 
and $\|\ZZ^{\dagger}\|$\label{SSS:Z}}

By  \cite[Proposition 5.2]{FHNWW}, the minimal eigenvalue $\lambda_{min}$
of $\WW^T \PhiB_X \WW$ 
is bounded below by $\lambda_{min} \ge C_1 q^{2m-d}$
for some constant $C_1>0$. 

Additionally, since $\SS$ is the positive square root of $\widehat{\AA}$ and 
$$\|\SS\| = \|\widehat{\AA}^{1/2}\|  = \|\widehat{\AA}\|^{1/2} = \|(\WW^T \PhiB \WW)^{-1}\|^{1/2} \leq C q^{d/2-m}$$ 
we have, since $\ZZ^{\dagger} = \UU^T \SS \WW^T$, that
$$
\|\ZZ^{\dagger}\| = \| \SS \|\le C q^{d/2-m}.
$$ 
From (\ref{Z_def}) we know $\|\ZZ\| =\|\SS^{-1}\|$, so it follows that 
$$\|\ZZ\|
=
\|\SS^{-1}\|= \|\widehat{\AA}^{-1/2}\|
=
\|\WW^T \PhiB_X \WW\|^{1/2}
\le \|\PhiB_X\|^{1/2}.$$
By the estimate $\|\PhiB_X\|\le \|\Phi\|_{\infty} N$,
we have 
$$
\|\ZZ\|
\le C q^{-d/2}.
$$

\paragraph{Estimating   $\| \RR\|$}
Treating perturbations of the kernel DM in  
the CPD 
case requires handling the upper right hand block
$\RR$ in  the decomposition (\ref{Block_decomposition}).

To estimate 
$\|\RR\| = 
\Bigl\| \Bigl(  
\PP^{\dagger}
\K_X-
\LLambda 
\PP^{\dagger}
\PhiB_X\Bigr)\AA \ZZ\Bigr\|$,
note  that
\begin{itemize}
\item  $\|\ZZ\|\le C q^{-d/2}$;
\item $\|\AA\| = \|\widehat{\AA}\| \le Cq^{d-2m} $ by (\ref{A_coeffs});
\item $\|\PP^{\dagger} \K_X\|\le \|\PP^{\dagger}\| \|\K_X\|
\le C\rho^d q^{-d/2}$; 
\item $\|\LLambda \PP^{\dagger} \PhiB_X\|\le (\max_{\ell<\m} \lambda_{\ell} )\|\PP^{\dagger}\| \|\PhiB_X\|
\le C\rho^d q^{-d/2}$.
\end{itemize}
The latter two  use the matrix norms 
$\|\K_X\|$ and $\|\PhiB_X\|$, which can be
estimated by introducing  supremum norms 
$\|\Phi\|_{\infty}= \max_{x,y}|\Phi(x,y)|$ 
and $\|\opL \Phi\|_{\infty} =\max_{x,y}|\opL\Phi(x,y)|$, 
to obtain
$\|\PhiB_X\| \le N \|\Phi\|_{\infty}$ 
and $\|\K_X\| \le N \|\opL\Phi\|_{\infty}$.
Recalling that $N\le C q^{-d}$,
we obtain the bound 
\begin{equation}
    \label{general_R}
\|\RR\|
\le C \rho^d q^{-d/2}  q^{d-2m} q^{-d/2}
\le C \rho^d
q^{-2m} .
\end{equation}

It is conceivable that a much better estimate is possible, since
the  factor 
$\PP^{\dagger}
\K_X-
\LLambda 
\PP^{\dagger}
\PhiB_X$,
which involves a commutator-like factor 
(namely $\PP^{\dagger}\opL - \opL \PP^{\dagger}$),
has been roughly estimated with a triangle inequality.
We investigate this numerically in the next section.

\paragraph{Estimating the condition number of $\VV$}

 We now consider the condition number 
 $\kappa =\mathrm{cond}(\VV)= \|\VV\| \|\VV^{-1}\|$
 appearing in Lemma \ref{Generalized_BF}.
 
 The blocks $\PP$ and $\ZZ$ of 
 $\VV = \begin{pmatrix}\PP &\ZZ\end{pmatrix}$
  have orthogonal ranges and nullspaces.
 Indeed, we have 
 $$\Ran \PP = \Pi_{\m-1}(X) =\Nul \QQ= \Nul \WW^{T}=\Nul \ZZ^{\dagger} $$
and
$$\Ran \ZZ =\bigl(\Pi_{\m-1}(X)\bigr)^{\perp}= \Ran \WW= \Ran \QQ=\Nul \PP^{\dagger}.$$
 Thus, $\|\VV\|
 \le \max(\|\PP\| ,\|\ZZ\|)$
 and
 $\|\VV^{-1}\| 
\le \max(\|\PP^{\dagger}\| ,\|\ZZ^{\dagger}\|)$.
It follows that 
 $$
\|\VV\|
 \le \max(\|\PP\| ,\|\ZZ\|) \le Cq^{-d/2}
 $$
 If $c_\ell \sim |\nu_{\ell}|^{-m}$, 
 we have 
$$
\|\VV^{-1}\| 
\le \max(\|\PP^{\dagger}\| ,\|\ZZ^{\dagger}\|) 
\le 
Cq^{d/2} \max(\rho^d,q^{-m})
.
$$
Together, this implies
  $$\kappa\le C \max(\rho^d,q^{-m}) .$$

  \begin{corollary}\label{CPD_cor}
Under hypotheses of Proposition \ref{Generalized_BF}, if
we assume the kernel's expansion coefficients
$c_{\ell}$ satisfy 
$c_\ell \sim |\nu_{\ell}|^{-m}$ for all $\ell\ge \m$
and $\rho\lesssim q^{-m/d}$,
then for a matrix 
$\MM^{\epsilon}$ 
sufficiently close to $\MM_X$,
and 
$\mu\in \sigma(\MM^{\epsilon})$ 
there is a $\lambda \in \sigma(\MM_X)$ for which
\begin{eqnarray*}
    |\mu-\lambda|&\le &
     C 
 \max
 \left(q^{-m/2}\|\RR\|^{1/2}\sqrt{\|\MM -\MM^{\epsilon}\|},
 q^{-m}\|\MM -\MM^{\epsilon}\|\right)\\
 &\le&
 C 
 q^{-3m/2}\sqrt{\|\MM_X -\MM^{\epsilon}\|}.
\end{eqnarray*}
  \end{corollary}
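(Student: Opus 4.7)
The plan is to apply Proposition \ref{Generalized_BF} directly and then substitute the norm estimates developed in section \ref{SS_norms_of_block_decomp}. Recall that Proposition \ref{Generalized_BF} yields
$$\dist(\mu,\sigma(\MM_X)) \le \max\bigl(2\kappa\,\|\MM_X-\MM^{\epsilon}\|,\;\sqrt{2\kappa\,\|\RR\|\,\|\MM_X-\MM^{\epsilon}\|}\bigr),$$
where $\kappa=\mathrm{cond}(\VV)$. The proof then reduces to controlling the two matrix-dependent factors $\kappa$ and $\|\RR\|$ by appropriate powers of the separation radius $q$, and comparing the two terms in the maximum.

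First I would bound $\kappa$. The key structural observation, already recorded in section \ref{SS_norms_of_block_decomp}, is that the blocks of $\VV=\begin{pmatrix}\PP & \ZZ\end{pmatrix}$ have orthogonal ranges, since $\Ran\PP = \Pi_{\m-1}(X)$ and $\Ran\ZZ = \Pi_{\m-1}(X)^{\perp}$; dually, $\Nul\PP^{\dagger}$ and $\Nul\ZZ^{\dagger}$ are orthogonal. Decomposing an arbitrary vector along these orthogonal summands gives $\|\VV\|\le \max(\|\PP\|,\|\ZZ\|)$ and $\|\VV^{-1}\|\le \max(\|\PP^{\dagger}\|,\|\ZZ^{\dagger}\|)$. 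Substituting the block bounds $\|\PP\|,\|\ZZ\|\le Cq^{-d/2}$, $\|\PP^{\dagger}\|\le C\rho^d q^{d/2}$, and $\|\ZZ^{\dagger}\|\le Cq^{d/2-m}$ from the preceding subsections yields $\kappa\le C\max(\rho^d,q^{-m})$, which under the mesh-ratio hypothesis $\rho\lesssim q^{-m/d}$ simplifies to $\kappa\le Cq^{-m}$.

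Next I would invoke the estimate (\ref{general_R}), which under the assumed mesh-ratio bound gives $\|\RR\|\le Cq^{-2m}$. Substituting these into Proposition \ref{Generalized_BF} produces
$$2\kappa\,\|\MM_X-\MM^{\epsilon}\|\le Cq^{-m}\|\MM_X-\MM^{\epsilon}\|, \qquad \sqrt{2\kappa\,\|\RR\|\,\|\MM_X-\MM^{\epsilon}\|}\le Cq^{-3m/2}\sqrt{\|\MM_X-\MM^{\epsilon}\|},$$
which is the first displayed inequality. For the second, refined, inequality I would simply compare the two terms: since $q^{-3m/2}\sqrt{\|\MM_X-\MM^{\epsilon}\|}\ge q^{-m}\|\MM_X-\MM^{\epsilon}\|$ exactly when $\|\MM_X-\MM^{\epsilon}\|\le q^{-m}$, the requirement that $\MM^{\epsilon}$ be ``sufficiently close to $\MM_X$'' is interpreted as precisely this smallness condition, under which the square-root term dominates the maximum.

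There is no real obstacle remaining at this stage. All of the substantive work, namely the block factorization of $\MM_X$ in Lemma \ref{block_diag_lemma}, the generalized Bauer-Fike estimate in Proposition \ref{Generalized_BF}, and the individual operator-norm estimates on $\PP$, $\ZZ$, $\AA$, and $\RR$ in section \ref{SS_norms_of_block_decomp}, has already been done. The corollary is essentially a bookkeeping step that specializes those results to kernels with Sobolev-type coefficient decay $c_{\ell}\sim|\nu_{\ell}|^{-m}$ and point sets whose mesh ratio grows no faster than $q^{-m/d}$.
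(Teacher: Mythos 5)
Your proposal follows essentially the same route as the paper's own terse proof: substitute the bound $\kappa\le Cq^{-m}$ (obtained from the block norms $\|\PP\|,\|\ZZ\|\le Cq^{-d/2}$, $\|\PP^{\dagger}\|\le C\rho^d q^{d/2}$, $\|\ZZ^{\dagger}\|\le Cq^{d/2-m}$ together with the mesh-ratio hypothesis $\rho\lesssim q^{-m/d}$) into Proposition~\ref{Generalized_BF} to obtain the first displayed inequality, then insert the estimate~\eqref{general_R} for $\|\RR\|$ to obtain the second. Your reading of ``sufficiently close'' as the threshold $\|\MM_X-\MM^{\epsilon}\|\lesssim q^{-m}$, below which the square-root term dominates the maximum, makes explicit something the paper leaves implicit, and is a correct and helpful addition.

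One subtlety is worth flagging, although the paper is equally silent about it. The estimate~\eqref{general_R} reads $\|\RR\|\le C\rho^d q^{-2m}$, and the hypothesis $\rho\lesssim q^{-m/d}$ gives only $\rho^d\lesssim q^{-m}$, so a literal substitution yields $\|\RR\|\le Cq^{-3m}$ and therefore $q^{-2m}\sqrt{\|\MM_X-\MM^{\epsilon}\|}$ rather than the stated $q^{-3m/2}\sqrt{\|\MM_X-\MM^{\epsilon}\|}$. The exponent $-3m/2$ is recovered only if the $\rho^d$ factor in~\eqref{general_R} is absorbed into the constant, i.e.\ under genuine quasi-uniformity with $\rho$ bounded independently of $q$; the weaker assumption $\rho\lesssim q^{-m/d}$ is precisely what is needed to collapse $\kappa\le C\max(\rho^d,q^{-m})$ to $\kappa\le Cq^{-m}$, but it does not suppress the $\rho^d$ in the $\|\RR\|$ bound. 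Your assertion that the mesh-ratio hypothesis ``gives $\|\RR\|\le Cq^{-2m}$'' therefore needs the stronger bounded-$\rho$ condition; since the paper's proof shares this gloss, it is not a deviation on your part, but it is the one place in the bookkeeping that deserves a second look.
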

  \begin{proof}
 Plugging the estimate $\kappa\le Cq^{-m}$
 into  Proposition \ref{Generalized_BF} gives 
 the first inequality.
 Using  the bound (\ref{general_R})
 for $\|\RR\|$
 gives
  the second. 
\end{proof}
%
 \subsection{Numerical estimates on $\|\RR\|$}  \label{SS:R_experiment}
In this section, we give numerical evidence that much better estimates for $\|\RR\|$ than \eqref{general_R} may be possible.  
We consider DMs for $\opL=-\Delta$ on $\Sph^2$ using the four families of point sets discussed in section \ref{S:Prelim} and illustrated in Figure \ref{fig:node}. 

\begin{figure}[htb]
\centering
\begin{tabular}{cc}
\includegraphics[width=0.45\textwidth]{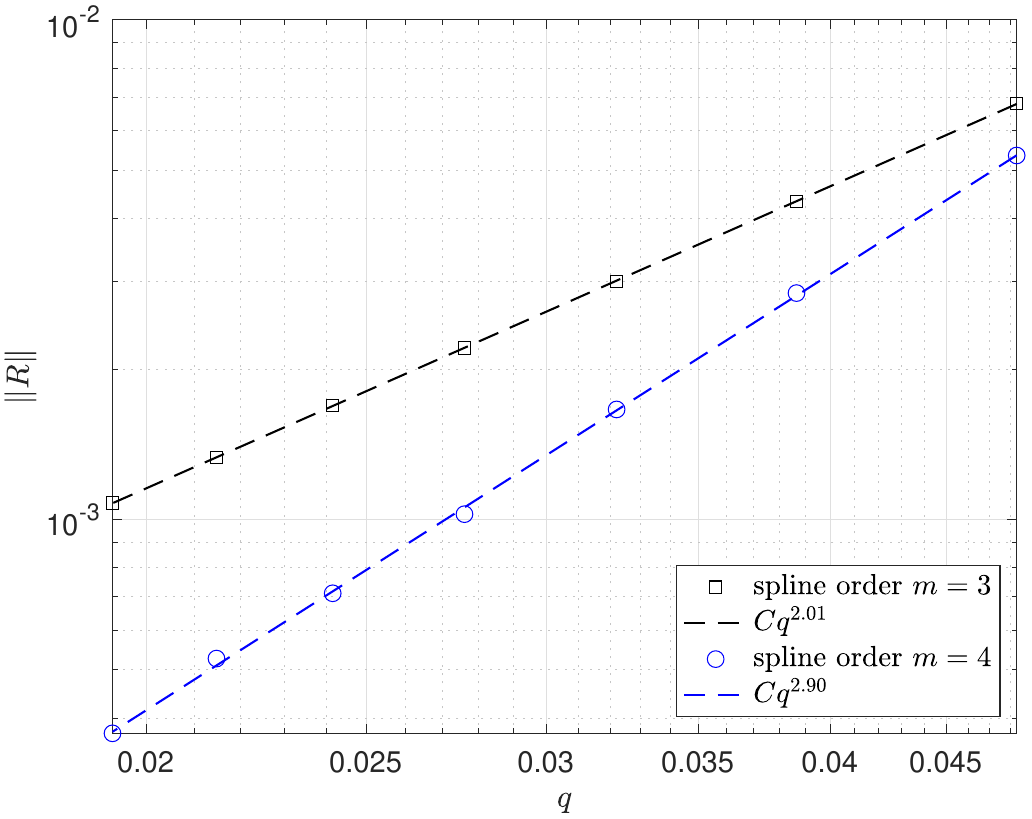} & \includegraphics[width=0.45\textwidth]{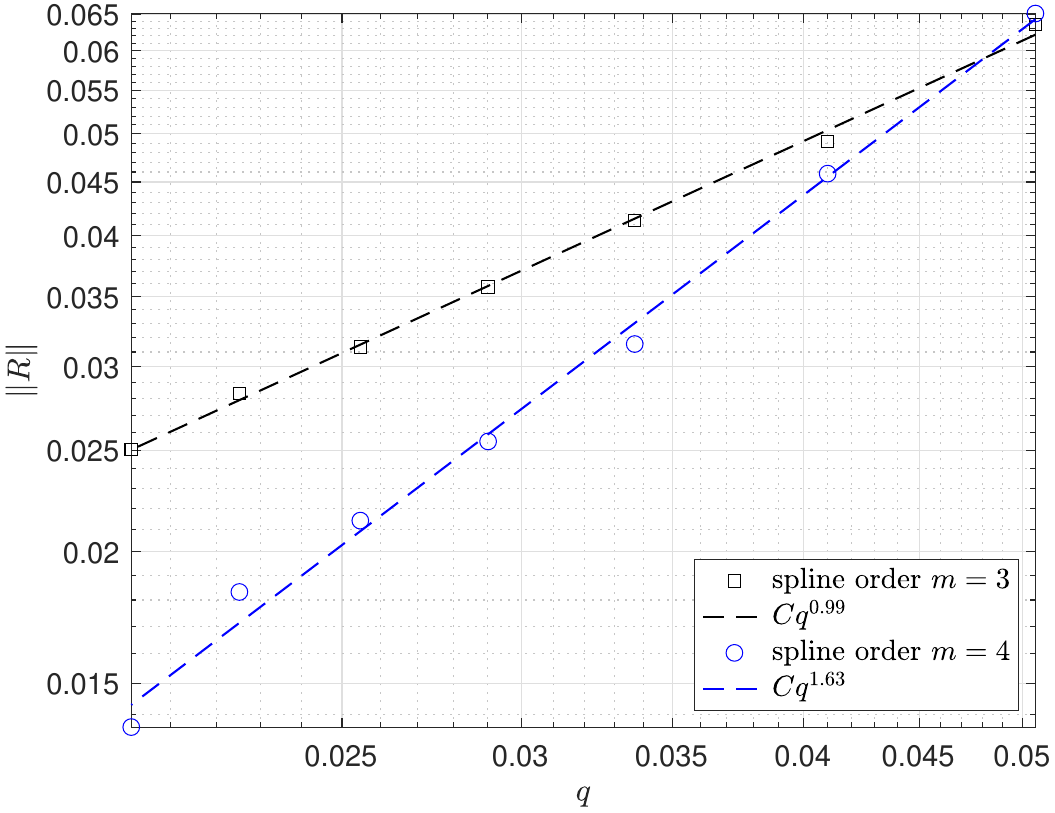} \\
(a) Fibonacci & (c) Maximum determinant \\
\includegraphics[width=0.45\textwidth]{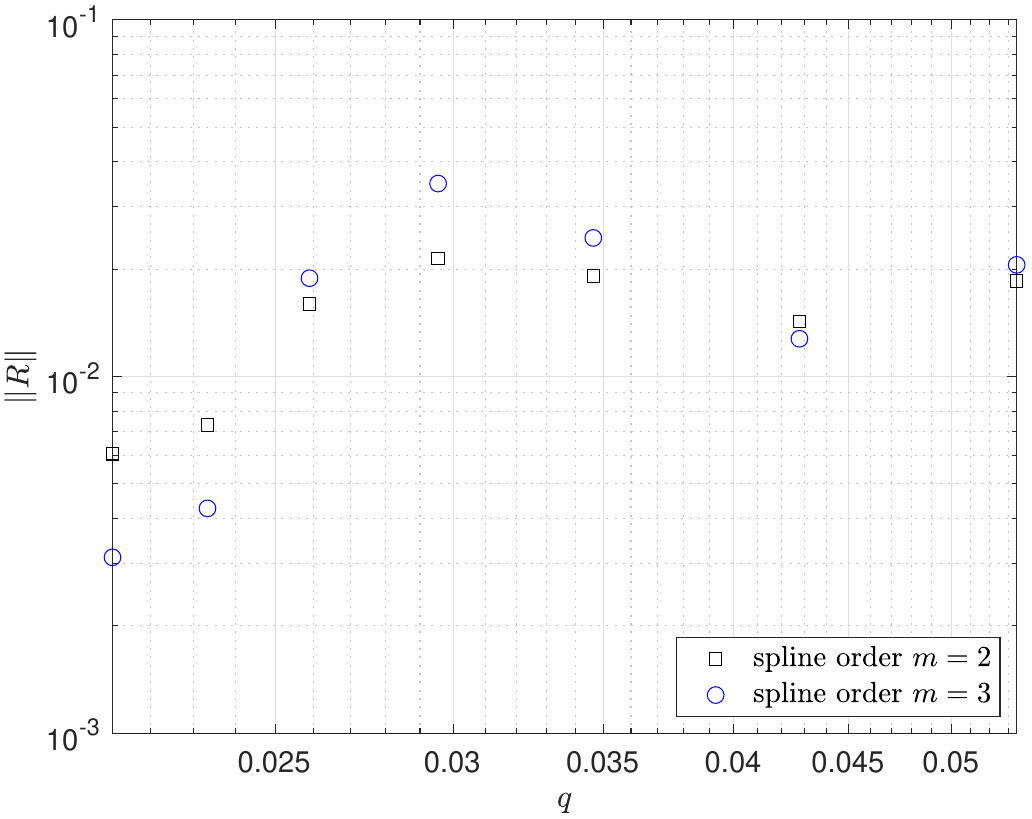} & \includegraphics[width=0.45\textwidth]{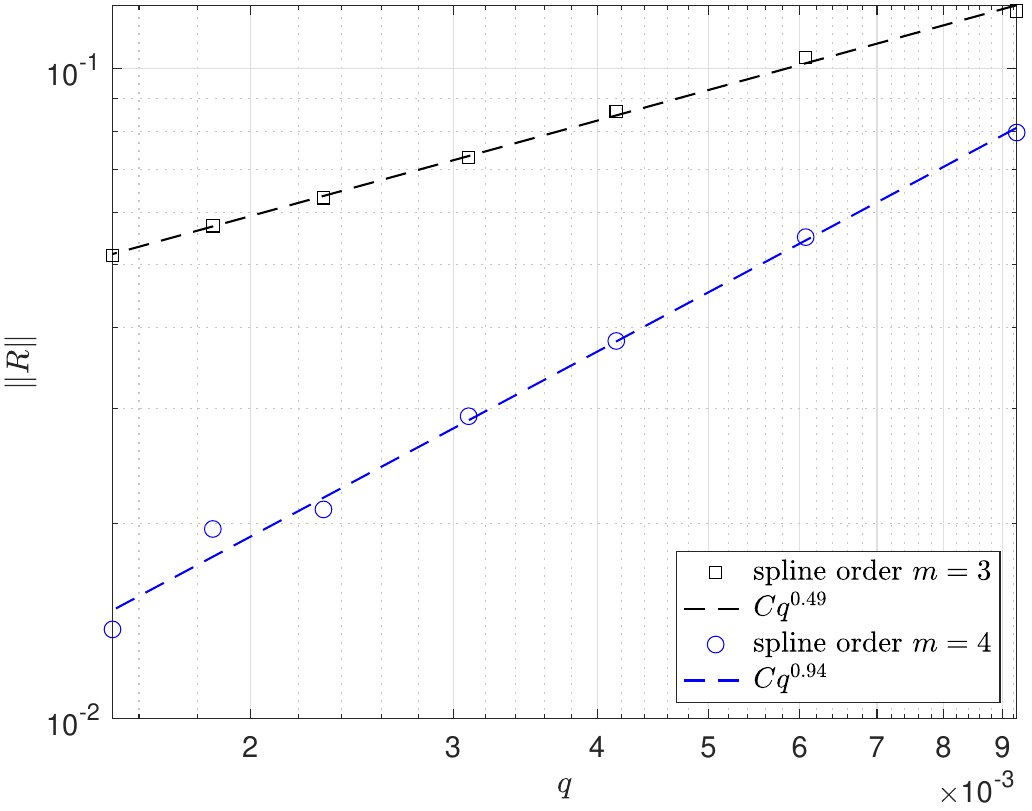} \\
(b) Minimum Energy & (d) Hammersley
\end{tabular}
\caption{Numerical results on $\|\RR\|$ vs.\ the separation radius $q$ for different point set families on $\mathbb{S}^2$ using $\opL = -\Delta$.  Each plot shows the results for the restricted surface \eqref{surface_spline} spline kernels of order $m=3$ and $m=4$ using augmented spherical harmonic spaces $\Pi_{m-1}$.  The dashed lines in (a), (c), \& (d) show the lines of best fit (on a log-scale) to the data, which indicate an algebraic decay rate of $\|\RR\|$ with decreasing $q$. The results in (b) do not show a discernible pattern of $\|\RR\|$ in terms of  $q$, so the estimated rates are omitted.\label{fig:NormR}}
\end{figure}

We first consider DMs formed from the restricted surface spline kernels:
\begin{equation}
\label{surface_spline}
    \Phi_m(x,y) = C_m(1-x\cdot y)^{m-1} \log(1-x\cdot y).
\end{equation}
These kernels are CPD of order $\m$, where $\m\geq m$, and have
a Mercer-like expansion (\ref{HS}) with 
coefficients that decay like 
$c_{\ell} \sim |\nu_{\ell}|^{-m}$ for $\ell\ge m$.
Indeed, for $\ell \ge m$, the kernel has expansion (\ref{HS})
with coefficients satisfying
$c_{\ell} = C\prod_{j=0}^{m-1} (\nu_{\ell} +j(j+1))^{-1}$
by \cite[Lemma 3.4]{HSphere}.  We consider the $m=3$ and $m=4$ kernels with augmented spherical harmonic spaces $\Pi_{m-1}$ (the minimum degree space required for well-posedness).  Figure \ref{fig:NormR} displays the results of $\|\RR\|$ computed for points $X$ of increasing cardinality $N$ (and hence decreasing $q$) from each family of point sets. Included in the plots from (a), (c), \& (d) of this figure are the estimated algebraic rates of decay of $\|\RR\|$ in terms of decreasing $q$; these estimates were omitted from (b) since no discernible pattern was evident. We note that the estimated rates in the three figures all involve positive powers of $q$ rather than negative powers as in the bound \eqref{general_R}. Furthermore, even the results in (b) do not follow \eqref{general_R}. 

\begin{figure}[htb]
\centering
\begin{tabular}{cc}
\includegraphics[width=0.42\textwidth]{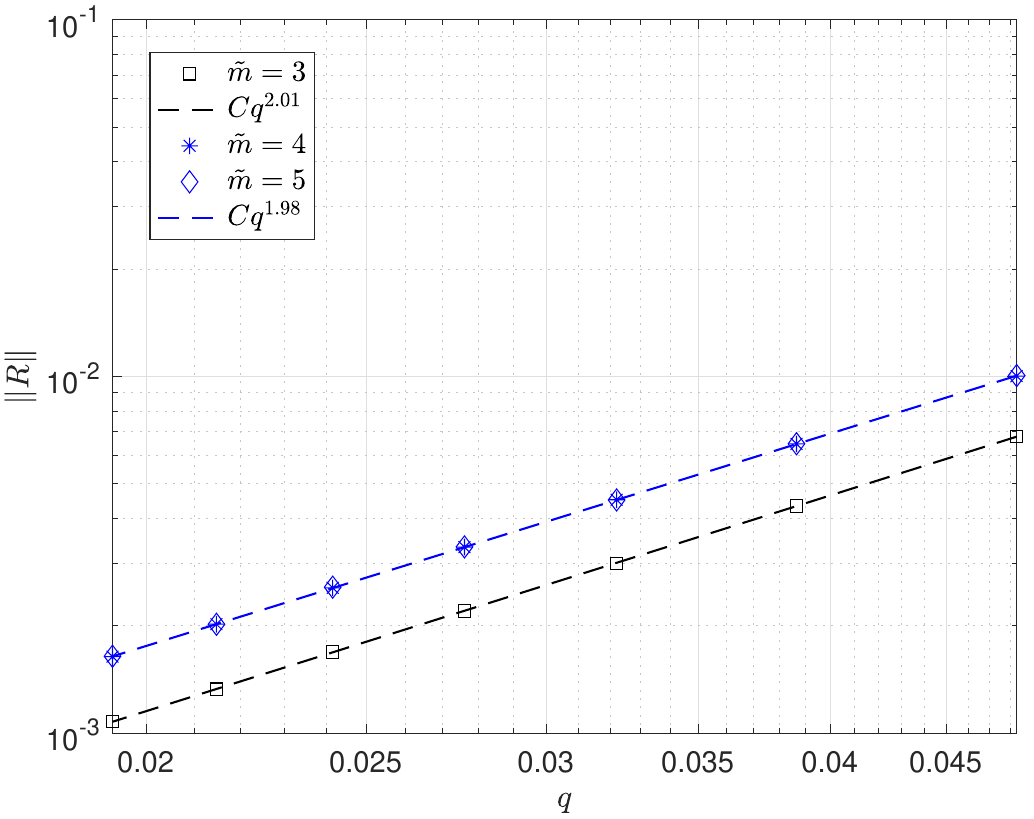} & \includegraphics[width=0.42\textwidth]{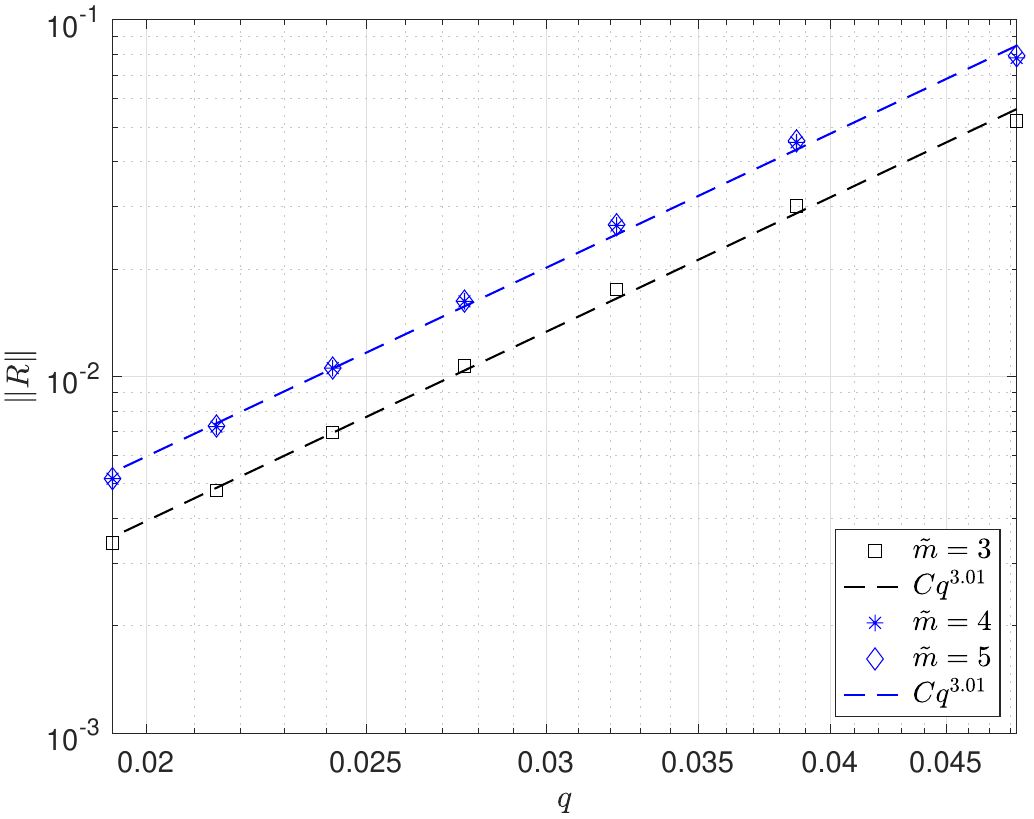} \\
(a) surface spline $m=3$ & (b) inverse multiquadric 
\end{tabular}
\caption{Numerical results on $\|\RR\|$ vs.\ the separation radius $q$ for the Fibonacci points on $\mathbb{S}^2$ using $\opL = \Delta$.  (a) Results from three experiments with the order of the restricted surface spline kernel \eqref{surface_spline} fixed at $m=3$ and the degree of augmented spherical harmonic spaces $\Pi_{\tilde{m}-1}$ changing. (b) Same as (a), but for the restricted inverse multiquadric kernel.  Dashed lines estimate the algebraic decay rate of $\|\RR\|$ with decreasing $q$.\label{fig:NormRSphDeg}}
\end{figure}

In the next experiment we test how $\|\RR\|$ depends on the degree of the augmented spherical harmonic basis $\Pi_{\tilde{m}-1}$, $\tilde{m}\geq m$, when the kernel is fixed. This is especially common for applications of kernel-based methods on $\Sph^2$ (and more general domains) where the order of the spline kernel is kept low and the degree of the polynomial basis is allowed to grow~\cite{shankar,bayona2017role}.  Figure \ref{fig:NormRSphDeg} (a) shows the results for the Fibonacci nodes.  We see from this plot that increasing the degree does not change the algebraic rate of decay of $\|\RR\|$ with decreasing $q$, but only (possibly) the constant.  We note that similar results were observed for the maximum determinant and Hammersley points and are thus omitted.

Finally, we consider the behavior of $\|\RR\|$ for the restricted inverse multiquadric kernel $\Phi(x,y) = (1+\varepsilon^2(1-x\cdot y))^{-1/2}$, where $\varepsilon >0$ is the free shape parameter.  This kernel is PD with coefficients $c_{\ell}$ in \eqref{HS} that decay exponentially fast with $\ell$~\cite{baxterhubbert}. Hence, the estimates for bounding $\|\RR\|$ in \eqref{general_R} do not apply.  Figure \ref{fig:NormRSphDeg} (b) displays the results associated with this kernel using the Fibonacci nodes.  Similar to part (a) we include results for $\|\RR\|$ when including different augmented spherical harmonic bases $\Pi_{\tilde{m}-1}$ with this kernel. As with the restricted surface spline results, $\|\RR\|$ seems to have an algebraic decay rate (of approximately 3) with decreasing $q$ and this rate does not seem to depend on $\tilde{m}$.  We note that similar results were observed (with different algebraic rates) for the other point set families and hence are omitted. 

These experiments suggest that the estimate in \eqref{general_R} is indeed overly pessimistic and better bounds for $\|\RR\|$ may be possible, even ones that extend to kernels with exponentially decaying Mercer expansions. Unfortunately, the geometry of the points seems to affect the algebraic decay of $\|\RR\|$ with decreasing $q$, so any tighter estimates may need to take the geometry into account.

\section{Applications}
In this section consider two applications of the theory of sections \ref{S:PD} and \ref{S:CPD_Stability}.
\label{S:applications}
\subsection{Continuous time-stability for the global DMs}
\label{SS:energy_stability}
Consider the semi-discrete approximation of the equation $\frac{\partial}{\partial t}u = \opL u$ at a set of points $X\subset\Sph^d$, where $\opL=p(\Delta)$ and $\sigma(\opL)\subset(-\infty,0]$ (e.g., for $\opL =\Delta$ corresponding to the diffusion equation).  We denote the approximation as 
\begin{align}
\frac{d}{d t}u_X = \MM_X u_X, 
\label{eq:semi_discrete} 
\end{align}
where $u_X: [0,\infty)\times X\mapsto \R$ and $\MM_X$ is the global DM associated with a PD or CPD kernel $\Phi$. 
 
A straightforward application of the results from sections \ref{S:PD} and \ref{S:CPD_Stability} can be used to show the solution of the semi-discrete system is energy stable, and thus the norm of $u_X(t)$ does not grow in time. The proofs differ depending on the kernel $\Phi$ used to construct $\MM_X$.

\paragraph{PD Kernel}
Using the norm $\|u\|_{\PhiB_X^{-1}}^2 := u^T \PhiB_X^{-1} u$,
we note that 
$$\frac{d}{dt}\|u_X\|_{\PhiB_X^{-1}}^2 =u_X^T (\MM_X^T\PhiB_X^{-1} + \PhiB_X^{-1} \MM_X)u_X
$$
which implies, using (\ref{DM_PD_def}) that
$
\frac12 \frac{d}{dt}\|u_X\|_{\PhiB_X^{-1}}^2 =u_X^T  \PhiB_X^{-1/2} \K_X \PhiB_X^{-1/2} u_X<0$,
so the semi-discrete problem \eqref{eq:semi_discrete} is energy stable.
\paragraph{CPD Kernel} 
In this case, we employ the 
semi-norm $[u]_{\AA}^2 := u^T \AA u$
where $\AA$ is the positive semi-definite matrix given in \eqref{A_coeffs}.
Then
$$\frac{d}{dt}[u_X]_{\AA}^2 =u_X^T (\MM_X^T\AA+ \AA \MM_X)u_X$$
which implies, since $\AA \PP=0$, that
$$\frac12 \frac{d}{dt}[u_X]_{\AA}^2 =u_X^T  \AA \K_X \AA u_X
=(\WW^T u_X)^T  \widehat{ \AA}
\widehat{ \K}_X \widehat{\AA} (\WW^T u_X).
$$
Here we have used (\ref{A_proj_identity}) and the change of basis (\ref{COB}).
Since $\widehat{ \K}_X$ has negative spectrum,
we have 
$\frac{d}{dt}[u_X]_{\AA}^2 <0$
as long as 
$
\WW^{T}
u_X\neq 0$. However, if 
$
\WW^{T}
u_X=0$, then $u_X\in \Pi_{\m-1}(X)$, 
in which case $\MM_X $ is exact and we can use the standard $\ell_2$ norm to show $\frac{d}{dt}\|u_X\|^2 \leq 0$. Thus, the semi-discrete problem \eqref{eq:semi_discrete} is energy stable also for the CPD case.

\subsection{Spectral stability of the local Lagrange DMs from restricted surface splines}
\label{S:Surface_splines}

We apply the results of section \ref{S:CPD_Stability} to
the restricted surface spline kernels 
$\Phi_m$  on $\Sph^2$ defined in
(\ref{surface_spline}).
Recall that $\Phi_m$ is CPD of order $\m$ as long as $\m\ge m$.

When $\m=m$, these kernels have the property, introduced  in \cite{FHNWW}, 
that for quasi-uniform point sets $X$,
the kernel spaces 
$S_X(\Phi_m)$
possess a
 {\em localized basis}:
 a basis $(b_j)_{j\le N}$, which enjoys the following two properties
 (among others)
\begin{itemize}
\item each function $b_j$ employs a small stencil:
$b_j\in S_{\Upsilon_j}(\Phi)$
where  $\Upsilon_j\subset X$ consists of points near to $x_j$,
\item each $b_j$
is close to $\chi_j$ in a variety of norms (roughly, for
the norm of any Banach space in which the native space is embedded
-- this will be made more precise below).
\end{itemize}
The sphere, along with Euclidean space, provides the most readily 
available kernels having localized bases, although they exist in other
settings as well. For any general compact, closed Riemannian manifold,
there exist PD kernels with this property, as shown in \cite{HNRW}, 
although they generally do not have convenient closed form expressions, 
or even expansions of the form (\ref{HS}) in a familiar orthonormal set.
For compact, rank 1 symmetric spaces 
(which include spheres of all dimensions, 
as well as a number of other sphere-like manifolds), 
there exist kernels with Mercer-like expansions 
in the Laplacian eigenbasis, as shown in \cite{HNW-p}, 
for which the ideas of \cite{FHNWW} can be applied to 
construct localized bases -- in particular, this is possible for 
restricted surface splines and similar kernels 
on higher dimensional spheres.

 \paragraph{Perturbation of the DM via localized bases}

Let $X\subset \Sph^2$ have mesh ratio $\rho:= h/q$.
For $K>0$, and for $x_j\in X$,
we define the stencil $\Upsilon_j:= X\cap B(x_j,Kh |\log h|)\subset X$,
and note that $\Upsilon_j$ has cardinality
$\#\Upsilon_j \sim K^2\rho^2  (\log h)^2
=\mathcal{O}\bigl(K^2(\log N)^2\bigr)$.

We define  
 the local Lagrange function 
 $b_j \in S_{\Upsilon_j}(\Phi_m)$ 
via the condition
$$ b_j(x_k) 
=\delta_{j,k},
\qquad \text{ for all }\qquad
x_k\in \Upsilon_j.$$
Note that $b_j\in S_X(\Phi_m)$,
since $\Upsilon_j\subset X$.
By \cite[Lemma 5.2]{EHNRW}, there exist positive constants $\alpha$ and $\beta$ so that
 for any stencil parameter $K>0$
 and
 for sufficiently dense $X\subset \Sph^2$,
 $$\|\opL b_j -\opL  \chi_{j} \|_{L_{\infty}(\Sph^2)} \le C h^J
 \qquad
\text{ holds with 
$J =\alpha K +\beta$.} 
$$
The local Lagrange DM, $\MLL_X$, is constructed by applying $\opL$ to each $b_j$:
$$
\Bigl(\MLL_X\Bigr)_{j,k} =\begin{cases} \opL b_k(x_j), & |x_j-x_k|\le K h|\log h|,\\ 0,& \text{otherwise}. \end{cases}
$$
Since $X$ has mesh-ratio $\rho$ 
 the stencil $\Upsilon_j$ has cardinality
$\#\Upsilon_j 
\le C \rho^2 \bigl(K^2(\log N)^2\bigr)$.
Thus 
there are $\mathcal{O}\bigl(K^2(\log N)^2\bigr)$ nonzero
entries in each row (or column) of $\MLL_X$.
Following the arguments in \cite{EHNRW}, 
specifically estimates \cite[(6.3)]{EHNRW} and \cite[(5.9)]{EHNRW}, 
we have that 
\begin{equation}
\label{perturbation_bound}
   \|\MM_X-\MLL_X\|_2 \le C h^{J-2}.
\end{equation}
We note that the approximation
{\em order} $J$ given by (\ref{perturbation_bound})  
has an linear dependence on the stencil parameter $K$,
namely $J=\overline{\alpha} K + \overline{\beta}$ for some $\overline{\alpha},\overline{\beta}\in\R$, with $\overline{\alpha}>0$.

We can measure the distance between spectra of $\MM_X$ and 
$\MLL_X$ by using either Proposition 
\ref{Generalized_BF} in the most general setting,
or Theorem \ref{Generalized_BF_Diagonalized} 
in cases where $\opL$ separates $\Pi_{m-1}$ from its orthogonal complement.

 \paragraph{General (non-diagonalized) case}
 We apply Proposition  \ref{Generalized_BF}, 
 with $\MM^{\epsilon} = \MLL_X$, and observe 
 that for every eigenvalue $\mu \in \sigma(\MLL_X)$
 there is 
 $\mu^{\star} \in \sigma(\MM_X)$
 for which 
 $|\mu-\mu^{\star}|
 \le 
 C \sqrt{\kappa\|\RR\|} h^{J/2 -1} $.
 At this point, we use the estimates on $\|\VV\|$,
 $\|\VV^{-1}\|$ and $\|\RR\|$ 
 collected in Corollary \ref{CPD_cor} to note that
 $$
 \max_{\mu\in\sigma(\MLL_X)}
\dist\bigl(\mu, \sigma(\MM_X)\bigr)
 \le C  h^{J/2 -1-3m/2}.$$
This suggests that $J$ (and therefore $K$) 
should be chosen
larger than $2m+2$ in order to ensure fidelity 
to the original 
(positive) spectrum of $\MM_X$.

\begin{remark} \label{Remark_pessimistic}
If the pessimistic bound $\|\RR\|\le Ch^{-3m}$ is replaced
 by $\|\RR\| \le C$, 
 as suggested by the experimental results 
 of section \ref{SS:R_experiment},
 then Corollary \ref{CPD_cor} gives the improved estimate
 $$
\max_{\mu\in\sigma(\MLL_X)}
\dist\bigl(\mu, \sigma(\MM_X)\bigr)
 \le C  h^{J/2 -1-m/2}.$$
\end{remark}

\paragraph{Diagonalizable case}
If $\opL$ satisfies
 $\gamma
 =\min_{\ell\ge \m}p(\nu_{\ell}) -\max_{\ell<\m} p(\nu_{\ell})>0$, 
 then we may apply 
 Theorem \ref{Generalized_BF_Diagonalized}
 to obtain 
  $$ \max_{\mu\in\sigma(\MLL_X)}
\dist\bigl(\mu, \sigma(\MM_X)\bigr)\le C  h^{J -2-5m}.$$
In particular, this holds if $\opL = -\Delta$.
(We recall that in this section
we require $\m=m$.)

\begin{remark} \label{Remark_pessimistic_diag}
Here, as in Remark \ref{Remark_pessimistic}, 
if
$\|\RR\|\le Ch^{-3m}$ is replaced
 by $\|\RR\| \le C$, 
 as suggested by the experimental results 
 of section \ref{SS:R_experiment},
 then Theorem \ref{Generalized_BF_Diagonalized} gives the improved estimate
 $$
\max_{\mu\in\sigma(\MLL_X)}
\dist\bigl(\mu, \sigma(\MM_X)\bigr)
 \le C  h^{J -2-m}.$$
\end{remark}

%

%
%
%
\subsubsection{Numerical results on spectra of the local Lagrange DMs}
\begin{figure}[htb]
\centering
\begin{tabular}{cc}
\includegraphics[width=0.47\textwidth]{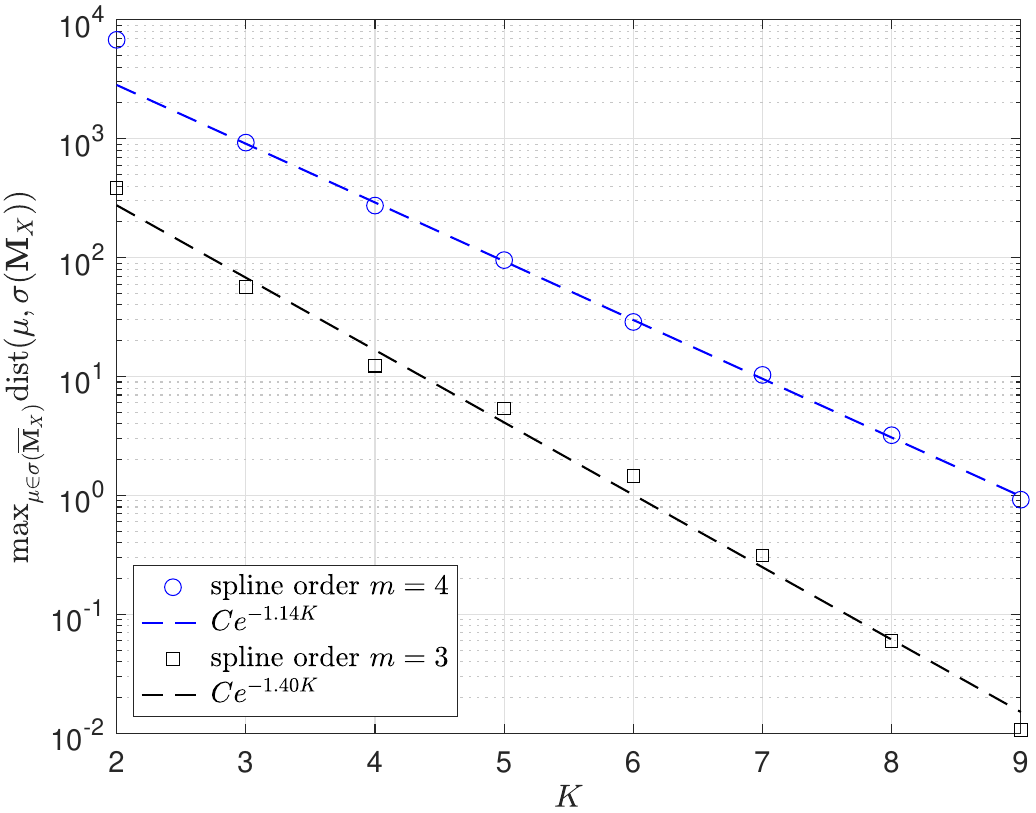} & \includegraphics[width=0.47\textwidth]{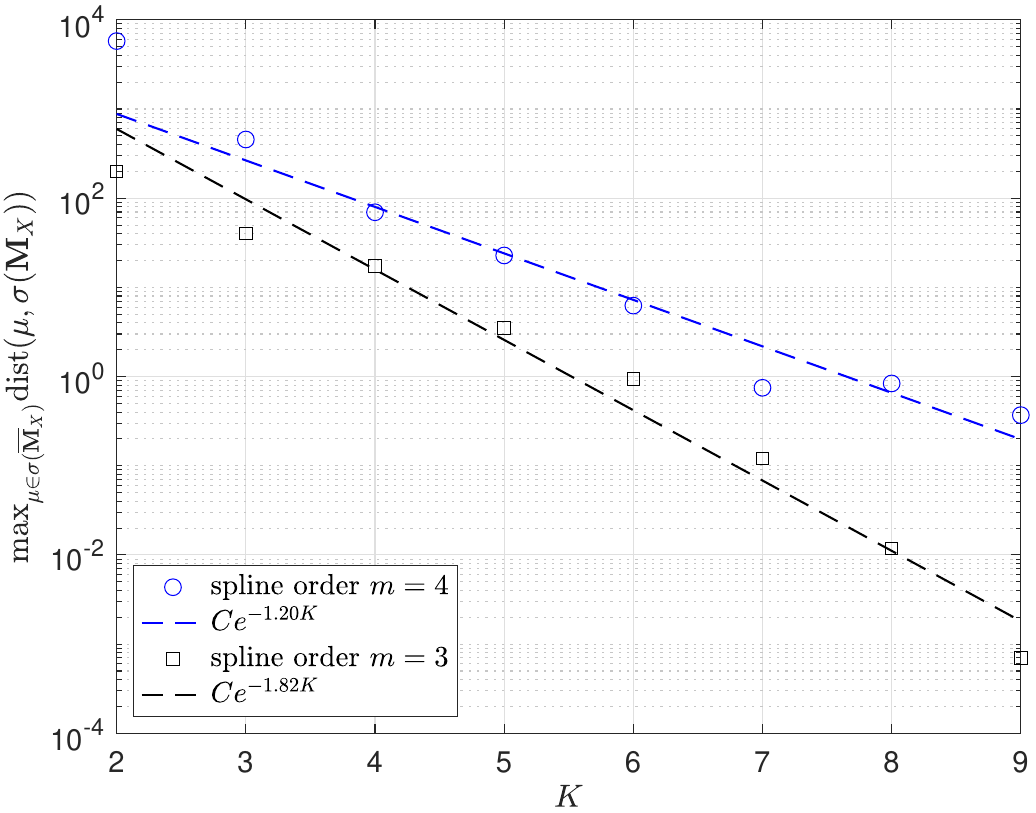} \\
(a) Fibonacci & (c) Maximum determinant \\
\includegraphics[width=0.47\textwidth]{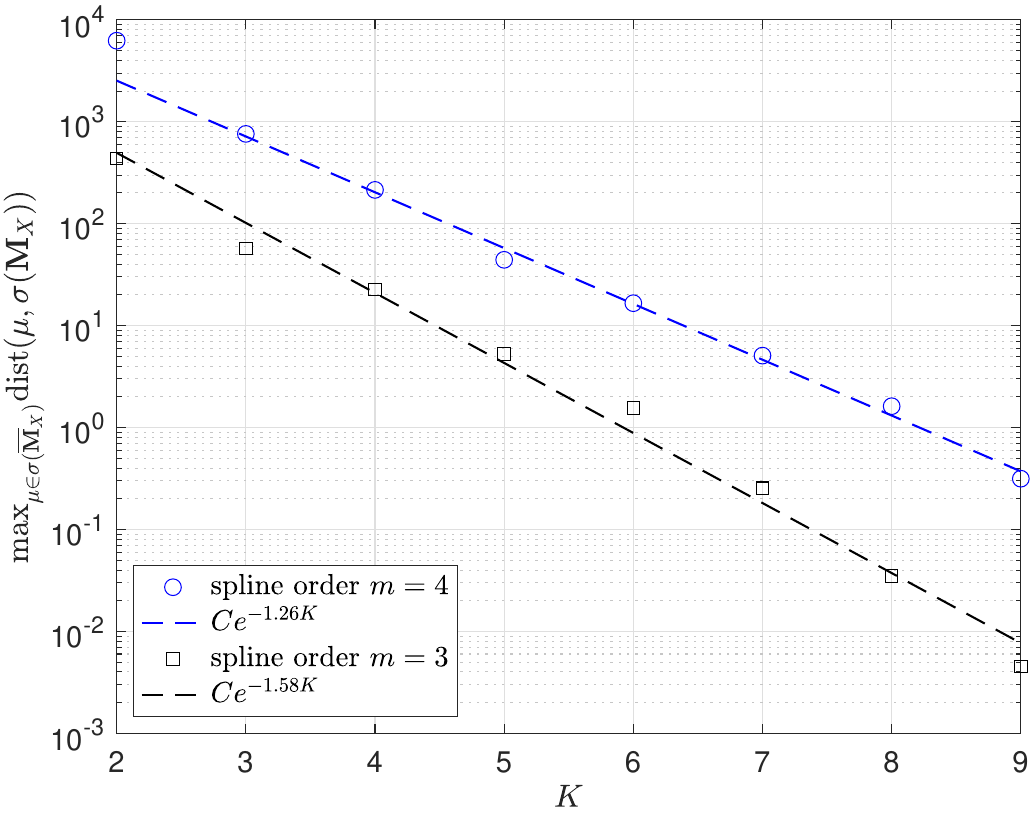} & \includegraphics[width=0.47\textwidth]{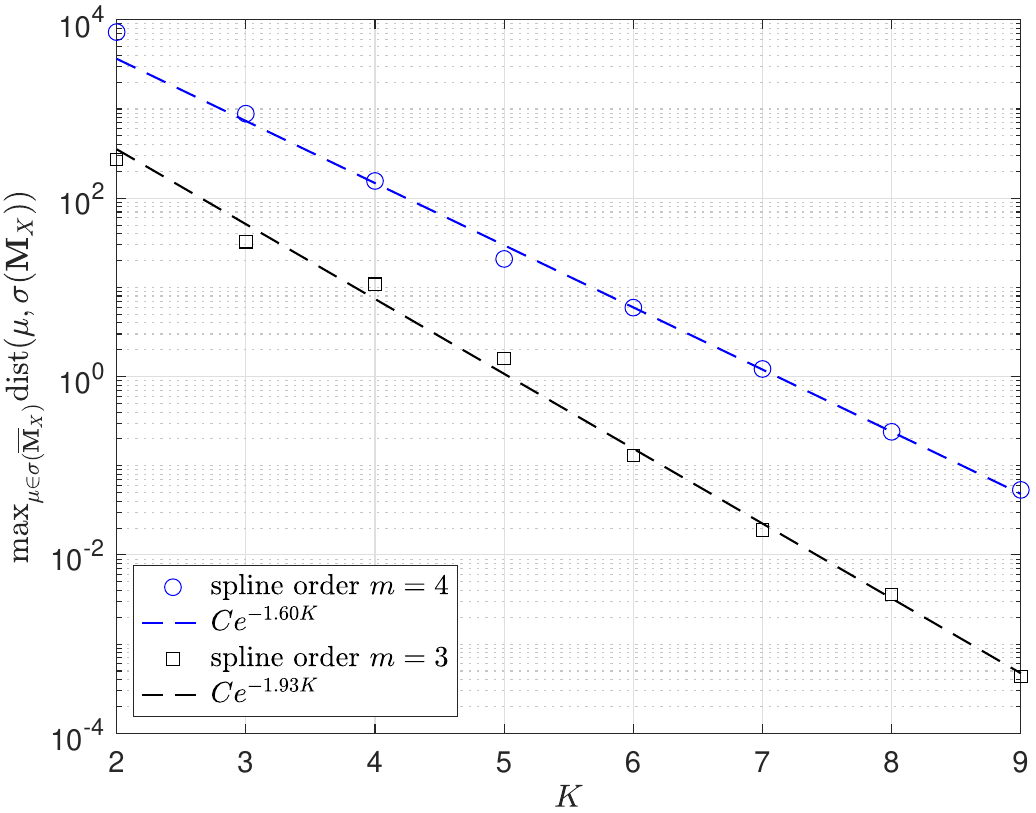} \\
(b) Minimum Energy & (d) Hammersley
\end{tabular}
\caption{Numerical results comparing the maximum of the minimum distances between the spectra of the global DM ($\MM_X$) and local Lagrange DM ($\MLL_X$) as a function of the parameter $K$ controlling the stencil size of the local Lagrange basis.  Results are for $\opL=\Delta$ computed from the restricted surface splines \eqref{surface_spline} of order $m=3$ and $m=4$ with augmented spherical harmonic spaces $\Pi_{m-1}$.  Dashed lines are the lines of best fit (on a log-linear scale) to the data for $3 \leq K \leq 9$, which indicate an exponential decay rate with increasing $K$.\label{fig:spectra_loclag}}
\end{figure}

The results from the previous section 
show that
\begin{align}
\max_{\mu\in\sigma(\MLL_X)}\dist\bigl(\mu, \sigma(\MM_X)\bigr) = \max_{\mu \in \sigma(\MLL_X)}\min_{\mu^\star \in \sigma(\MM_X)} |\mu-\mu^\star|
\label{eq:spectra_dist}
\end{align} 
can be controlled by an expression of the form
$h^{\alpha K +\beta}$ where $\alpha>0$, although the precise dependence of $\alpha$ and $\beta$
on $m$ and $\opL$ are not
easily determined analytically. 
In this section, we numerically examine the exponential decay of these bounds in terms of $K$ for $\opL = -\Delta$.
Rather than choosing the support points in each stencil from a ball search with a radius that depends on $K$, we use a nearest neighbor search so that the cardinality of each stencil is fixed at
\begin{align}
n = \lceil \frac17 K^2 (\log N)^2\rceil.
\label{eq:stencil_support}
\end{align}
For a quasi-uniform point set $X$, this gives similar results to a ball search.  Additionally, this nearest neighbor approach with a fixed stencil size is more common in RBF-FD methods~\cite{fornberg_flyer_2015}.

We again use the four families of point sets described in section \ref{S:Prelim} (and displayed in 
Figure \ref{fig:node}): Fibonacci, minimum energy, maximum determinant, and Hammersley.  As discussed in that section, the first three of these are quasi-uniform so the theory from the previous section applies.  We also include results on the Hammersley points to see if this theory can potentially be generalized to more general point sets. For the numerical experiments we set $N=4096$ for all but the Fibonacci nodes, where $N=4097$ since they are only defined for odd numbers. 
\begin{figure}[t]
\begin{tabular}{cc}
\includegraphics[width=0.48\textwidth]{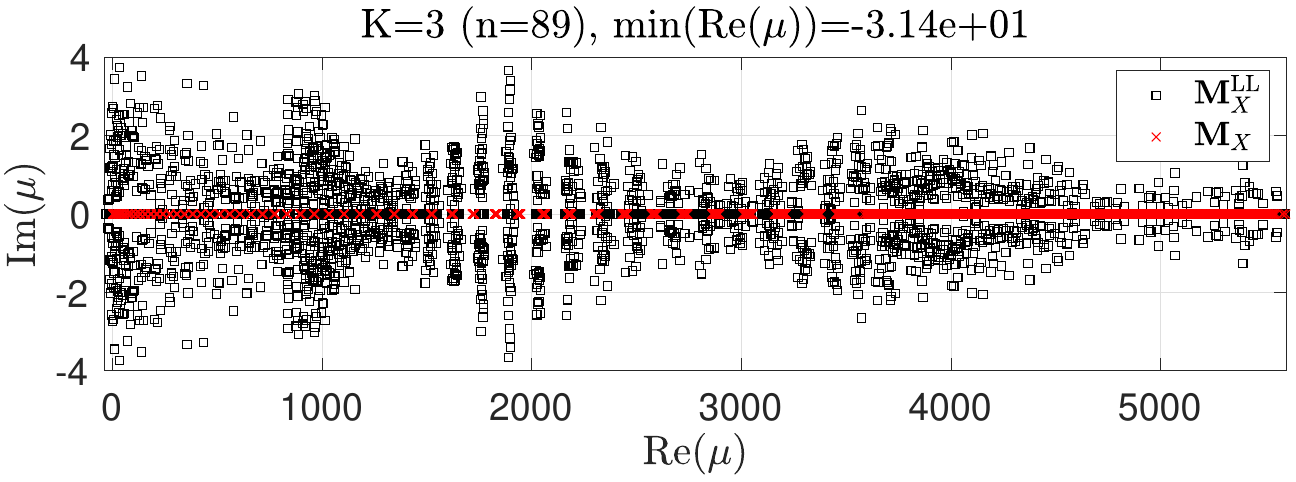} & \includegraphics[width=0.48\textwidth]{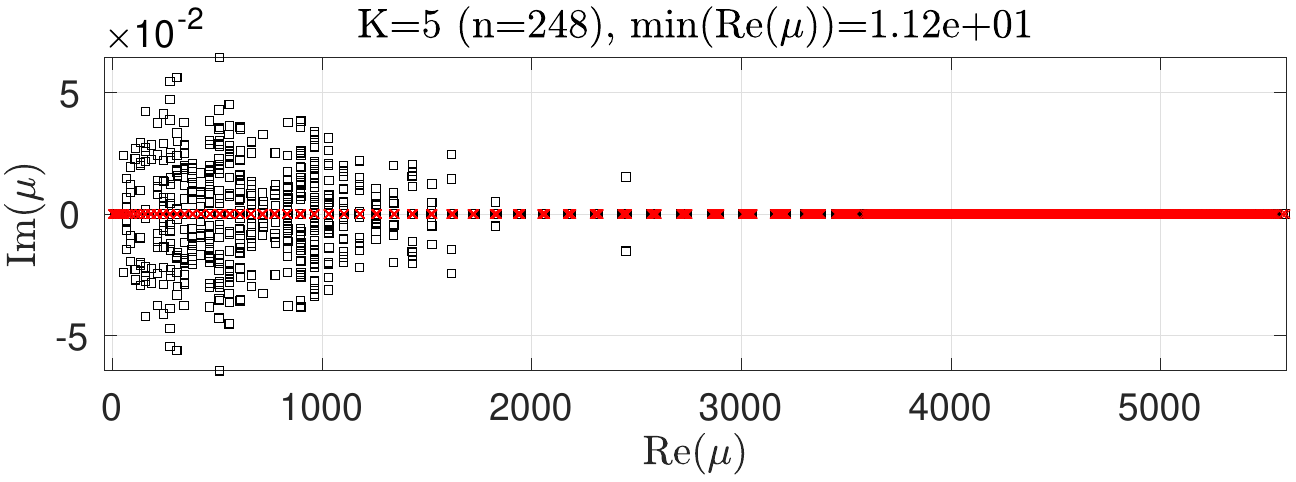} \\
\includegraphics[width=0.48\textwidth]{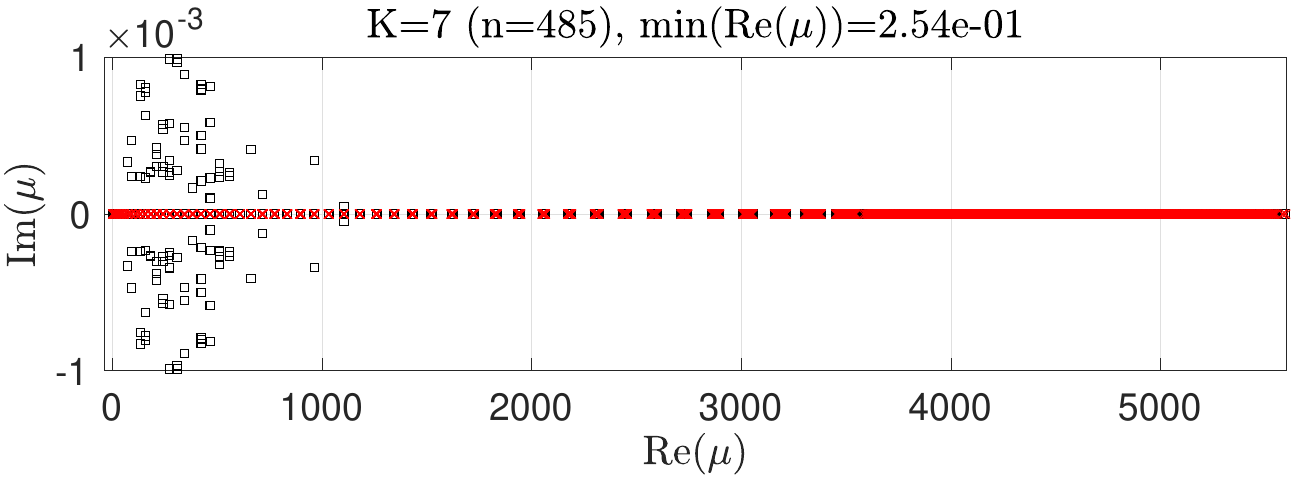} & \includegraphics[width=0.48\textwidth]{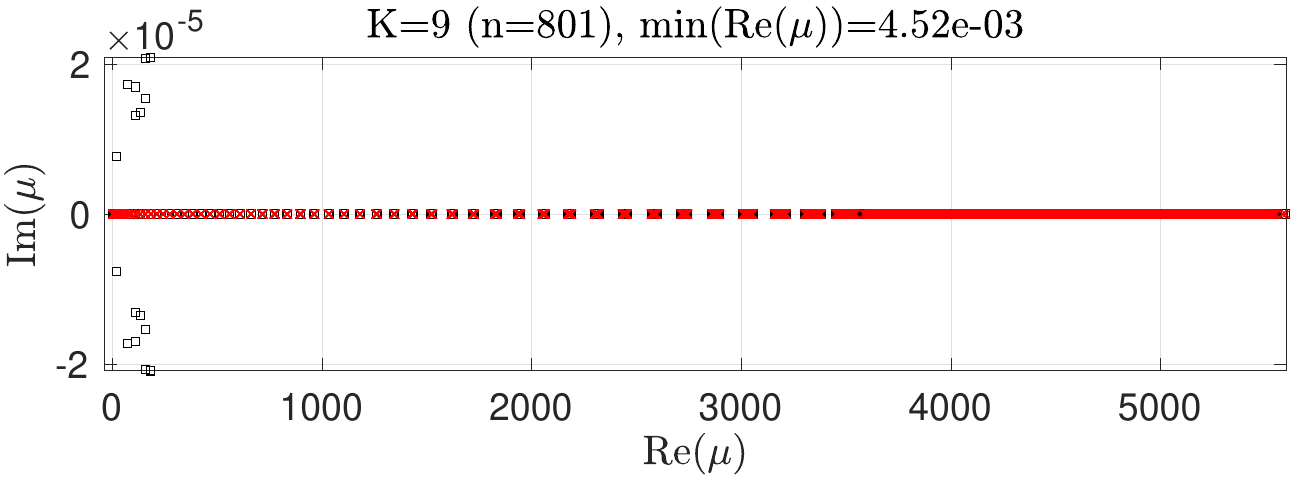} \\
\end{tabular}
\caption{Comparison of the spectra of the global ($\MM_X$) and local Lagrange ($\MLL_X$) DMs based on different stencil sizes. Results are for the $\opL=\Delta$ with $N=4096$ minimum energy points and the kernel is the restricted surface spline kernel of order $m=3$.  The minimum real part of spectrum of $\MLL_X$ is listed in the title of each plot . Note the different scales for the vertical (imaginary) axes on each plot.\label{fig:spectra_comparison_loclag}}
\end{figure}

Figure \ref{fig:spectra_loclag} displays the results from the experiments for the restricted surface spline kernels of orders $m=3$ and $m=4$. We see from the figure that distance between the spectra \eqref{eq:spectra_dist} decreases exponentially fast with $K$ for all four families of point sets, but that the rate depends on the geometry of the point set. We also see that the decay rate depends on the order $m$ of the kernels as the estimates from the previous section predict.  

To better compare the spectra of $\MM_X$ and $\MLL_X$, we plot the entire spectrum of each matrix for the minimum energy points and four different $K$ values in Figure \ref{fig:spectra_comparison_loclag}. We see from the plots in this figure that as $K$ increases the spread of the eigenvalues of $\MLL_X$ in the complex plane decreases.  Additionally, for this point set, the real part of the spectrum of $\MLL_X$ is non-negative for all but the $K=3$ case (see in the titles of the plots of the spectra).
%

%
%
%
\subsection{Numerical results on spectra of the RBF-FD DMs}\label{rbf_fd}
\begin{figure}[htb]
\centering
\begin{tabular}{cc}
\includegraphics[width=0.47\textwidth]{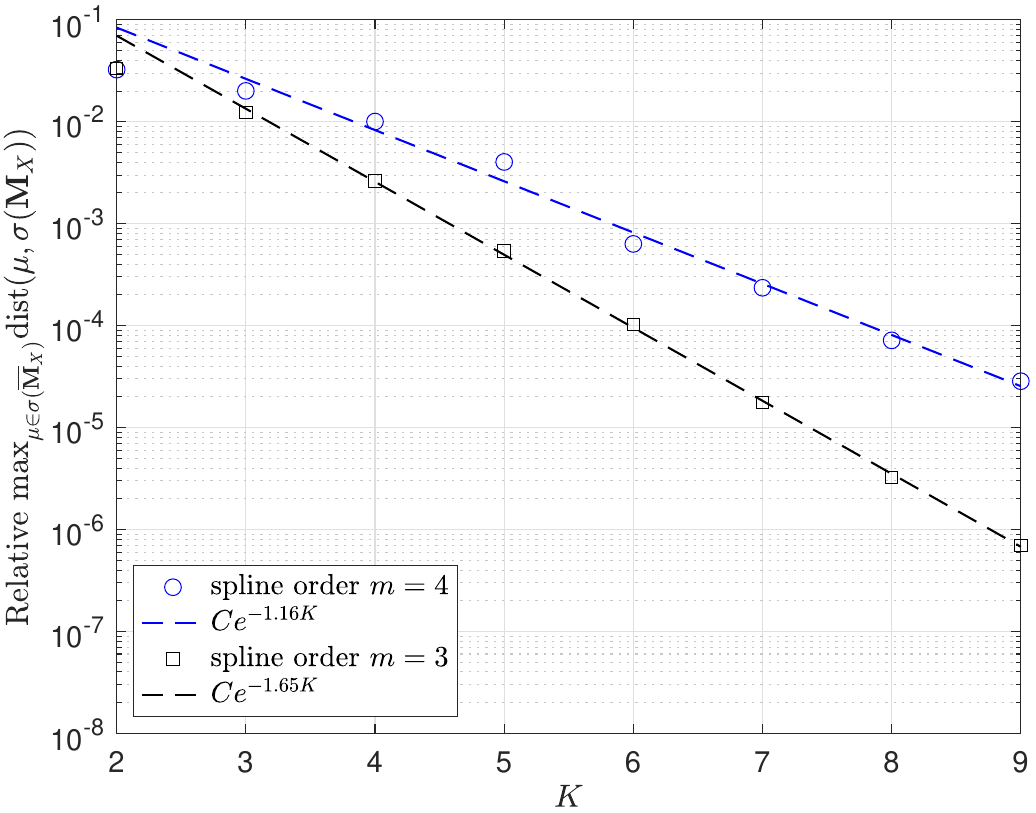} & \includegraphics[width=0.47\textwidth]{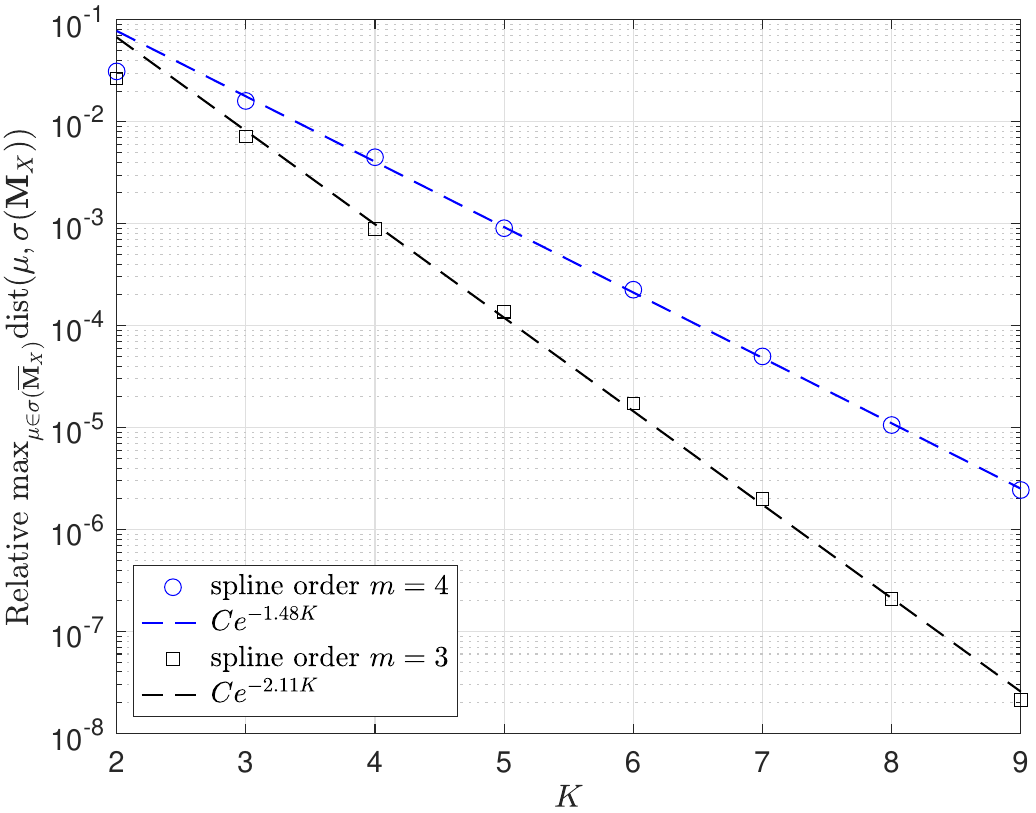} \\
(a) Fibonacci & (c) Maximum determinant \\
\includegraphics[width=0.47\textwidth]{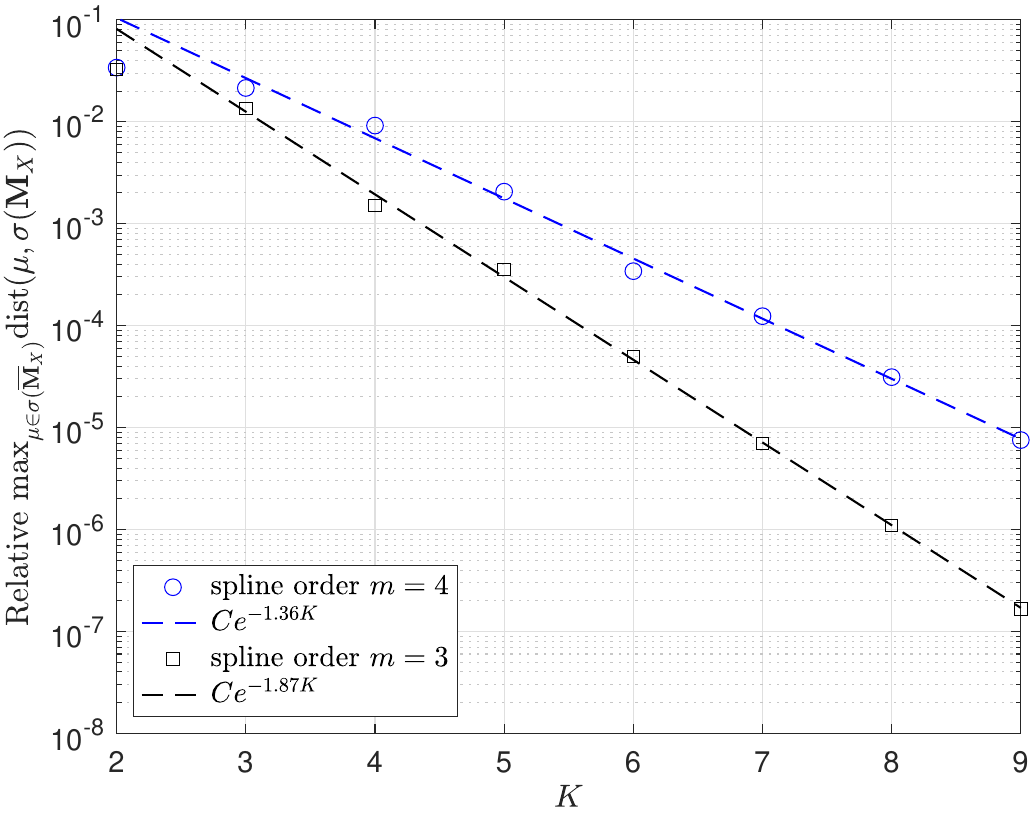} & \includegraphics[width=0.47\textwidth]{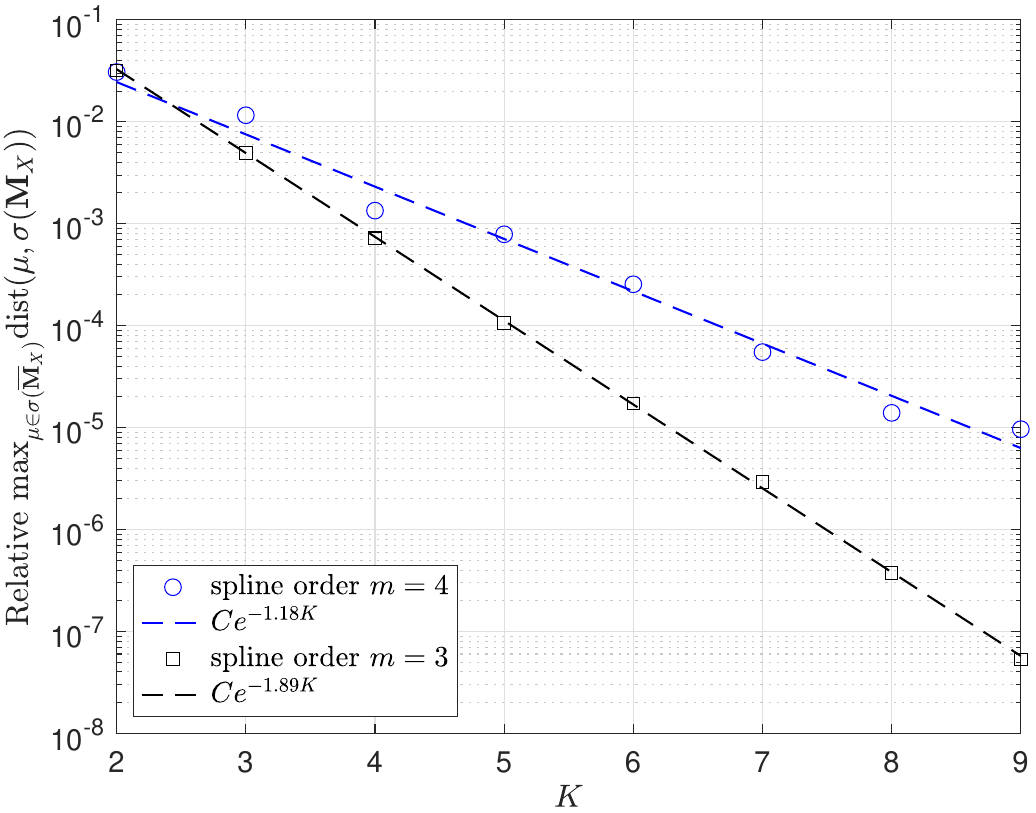} \\
(b) Minimum Energy & (d) Hammersley
\end{tabular}
\caption{Same as Figure \ref{fig:spectra_loclag}, but for the RBF-FD DM ($\MFD_X$).\label{fig:spectra_rbffd}}
\end{figure}

The RBF-FD method is similar to the local Lagrange method in that it produces a sparse DM, $\MFD_X$, for approximating $\opL$ at a set of points $X$ using kernel interpolation over local stencils of points.  However, unlike the local Lagrange case, estimates are not yet available for bounding $\|\MM_X-\MFD_X\|$ so that the results of Proposition 
\ref{Generalized_BF} or Theorem \ref{Generalized_BF_Diagonalized} can be applied to the spectral stability of $\MFD_X$ .  In this section we numerically compare the stability of $\MFD_X$ in terms of $\MM_X$ using \eqref{eq:stencil_support} to give evidence that similar bounds to the ones from Section \ref{S:Surface_splines} for $\MLL_X$ hold.

First, we briefly review the RBF-FD method for the case of surface splines on $\Sph^2$; see ~\cite{fornberg_flyer_2015} for more general settings and details.  Let $\Upsilon_j\subset X$ again denote a local stencil of points for each $x_j \in X$, and $u:\Sph^2\rightarrow\R$ be some generic function. To approximate the operator $\opL u$ at $x_j$, the RBF-FD method uses
\begin{align*}
\opL u(x_j) \approx \sum_{k\in\sigma_j} \opL\chi_k^{(j)}(x_j)  u(x_k),
\end{align*}
where $\chi_k^{(j)}$ are \emph{stencil Lagrange functions} associated with $\Upsilon_j$ and $\sigma_j$ is the index set for the stencil $\Upsilon_j$ in the global node set $X$.  The stencil Lagrange functions are in the space $S_{\Upsilon_j}$ defined in \eqref{trialspace} and satisfy $\chi_k^{(j)}(x_i) = \delta_{i,k}$ for $i,k\in\sigma_j$.  The entries of RBF-FD DM are then given as follows:
$$
\Bigl(\MFD_X\Bigr)_{j,k} =\begin{cases} \opL \chi_k^{(j)}(x_j), & k \in\sigma_j \\ 0,& \text{otherwise}. \end{cases}
$$
The difference between the local Lagrange and RBF-FD methods may be subtle, but it is important.  The latter is based on a separate interpolant over the stencil $\Upsilon_j$ defined by the target point $x_j$, while the former is based on interpolants defined over all stencils that have a non-empty intersection with the target point $x_j$.  The RBF-FD method is then necessarily exact on the space $\Pi_m(X)$, but the local Lagrange method is not. We finally note that both methods produce $\MM_X$ in the limit that each stencil includes every point in $X$, i.e., $\Upsilon_j=X$.

For the numerical results, we follow exactly the same set-up as the previous section and compare the spectra of $\MFD_X$ to $\MM_X$ using the measure \eqref{eq:spectra_dist} for different $K$ and surface spline orders $m$. We note that since $\MFD_X$ is exact on $\Pi_m(X)$ its spectrum will included $\Lambda_{m-1} = \{\lambda_{\ell}\}_{\ell = 0}^{m-1}$, where are the first $m-1$ eigenvalues of $\opL$.

\begin{figure}[t]
\begin{tabular}{cc}
\includegraphics[width=0.48\textwidth]{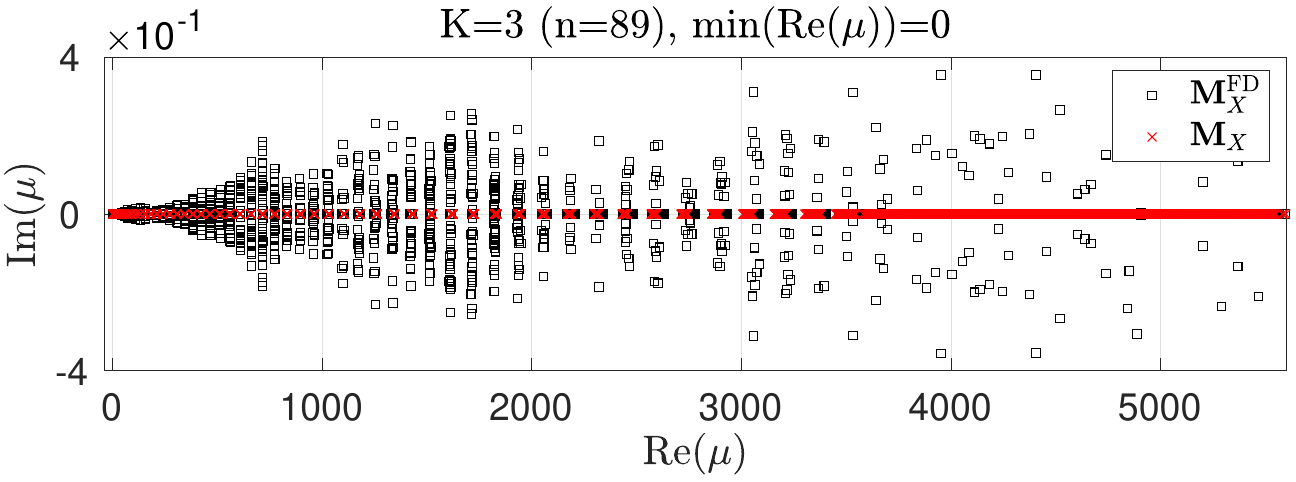} & \includegraphics[width=0.48\textwidth]{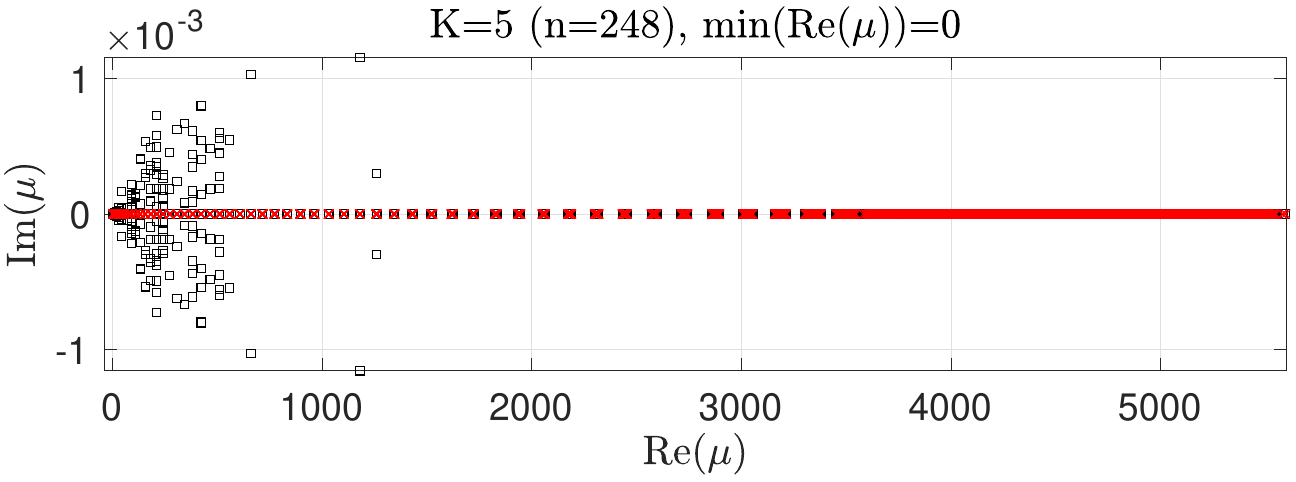} \\
\includegraphics[width=0.48\textwidth]{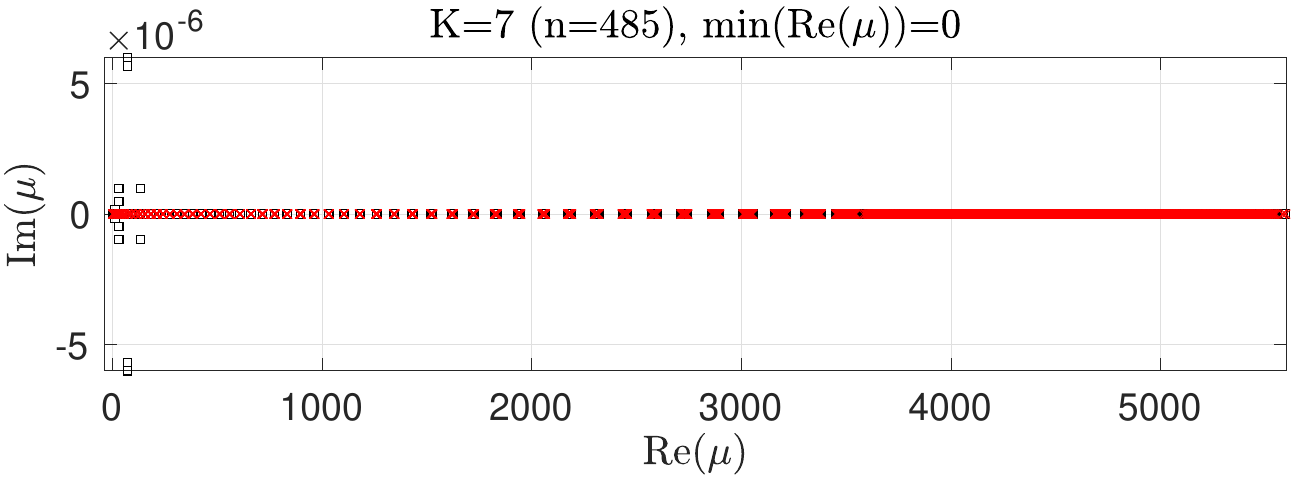} & \includegraphics[width=0.48\textwidth]{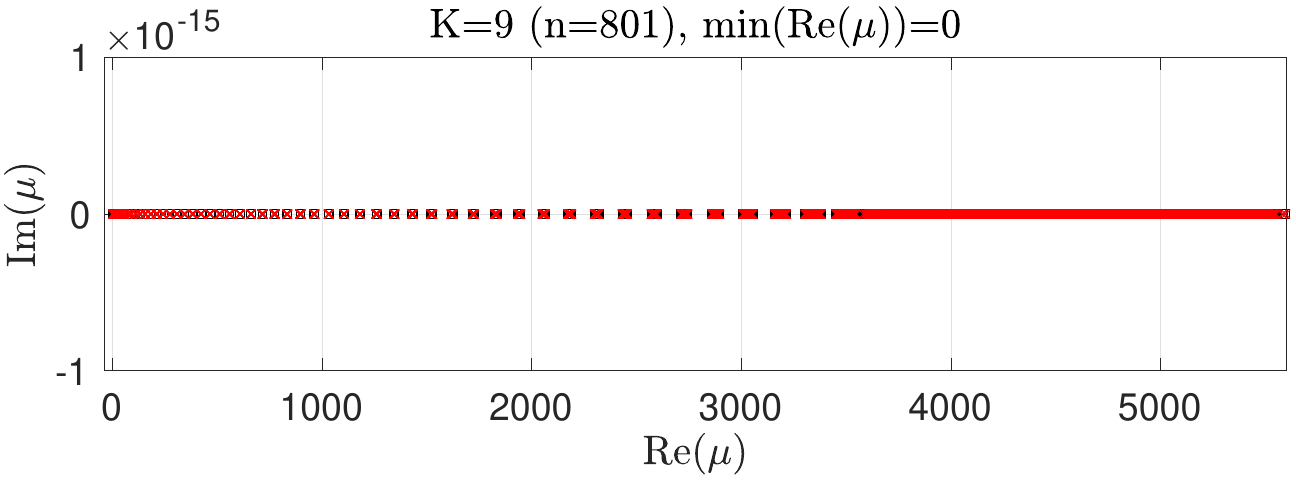} \\
\end{tabular}
\caption{Same as Figure \ref{fig:spectra_comparison_loclag}, but for the RBF-FD DM ($\MFD_X$).\label{fig:spectra_comparison_rbffd}}
\end{figure}

Figure \ref{fig:spectra_rbffd} displays the results from the experiments.  Similar to the local Lagrange case, we see from the figure that the distance between the spectra \eqref{eq:spectra_dist} for $\MFD_X$ also decreases exponentially fast with $K$ for all four families of point sets. This indicates that similar bounds to those from Section \ref{S:Surface_splines} for $\MLL_X$ may also hold for $\MFD_X$.

The spectra of $\MM_X$ and $\MFD_X$ are directly compared in Figure \ref{fig:spectra_comparison_rbffd} for the minimum energy points. We see from this figure that for the same $K$ the spectrum of $\MFD_X$ does not extend as far in the complex plane as the spectrum of $\MLL_X$ given in Figure \ref{fig:spectra_comparison_loclag}. Additionally, the rate at which the spectrum approaches the positive real axis as $K$ increases is higher for the RBF-FD method.  Finally, the figure shows that for each $K$ the eigenvalue of $\MFD_X$ with the smallest real part is 0, which is the expected value for $\opL = -\Delta$.

\section{Concluding remarks}
The results of this paper have addressed important issues regarding the spectrum of DMs that arise in global and local kernel collocation methods and their stability under perturbations.  However, there remain significant open questions in both the global and the local cases. We briefly note three interesting open problems: 
\begin{itemize}
\item There is more to temporal stability of method of lines (or semi-discrete) approximations like \eqref{eq:semi_discrete} considered in section \ref{SS:energy_stability} than the spectrum of the DM $\MM_X$ or the energy stability of the solutions of \eqref{eq:semi_discrete}.  Showing stability for the fully discrete problem (i.e., after discretizing \eqref{eq:semi_discrete} in time) requires addressing questions about the pseudospectra (resolvent stability)~\cite{TrefethenEmbree2005} or strong stability~\cite{Tadmor} of the DMs.  In the latter case, \cite[Section 3.1]{Tadmor} considers the coercivity condition
$$\Re\langle u,\MM_X u\rangle_* \le -\eta\|\MM_X u\|_*^2,\; \eta > 0, $$
 of a matrix in some inner product  $\langle\cdot,\cdot\rangle_*$ (this is\cite[(3.2)]{Tadmor}).  Coercivity guarantees stability of certain Runge-Kutta method applied to \eqref{eq:semi_discrete} provided a CFL-type condition holds. 
\item If $\sigma(\opL)\subset(-\infty,0]$ and $0\in\sigma(\opL)$,  then
a perturbed DM $\MM^{\epsilon}$ may fail to inherit the Hurwitz stability of $\MM_X$.
In short, the results presented here do not guarantee that perturbation  preserves
weak Hurwitz stability (semi-stability).
\item 
The challenge of obtaining useful perturbation errors for $\MFD_X$ that are similar to (\ref{perturbation_bound}) amounts to controlling
the difference $ \opL \chi_k^{(j)}(x_j)-\opL\chi_j(x_j)$.
Superficially, this may seem similar to 
$\opL b_j(x_j)-\opL\chi_j(x_j)$, but the challenge 
stems from the fact that the center $x_k$ associated with
$\chi_k^{(j)}$ may be 
situated near to the boundary of the domain of
$B(x_j,Kh|\log h|)$, which is
 where the  decay conditions of the Lagrange
functions are not yet well understood.
\end{itemize}

\bibliographystyle{plain}
\bibliography{gfd}
\end{document}